\DeclareMathOperator{\diver}{div}
\DeclareMathOperator{\sign}{sign}
\newcommand \commentout[1] {}
\newcommand\R{\mathbb{R}}
\newcommand{\A}{\mathcal{A}}
\newcommand{\B}{\mathcal{B}}
\newcommand{\C}{\mathcal{C}}
\newcommand{\D}{\mathcal{D}}
\newcommand{\p}{p_\gamma}
\newcommand{\n}{n_\gamma}
\newcommand{\cgam}{c_\gamma}
\newcommand{\dt}{dt}
\theoremstyle{plain} 
\newtheorem{thm}{Theorem}[section] 
\newtheorem{prop}[thm]{Proposition}
\newtheorem{definition}[thm]{Definition}
\title{Free boundary limit of tumor growth model with nutrient}
\author{Noemi David\thanks{Sorbonne Universit{\'e}, Inria, CNRS, Universit\'{e} de Paris, Laboratoire Jacques-Louis Lions UMR7598, F-75005 Paris. 
		Email : noemi.david@ljll.math.upmc.fr}
	\and
	Beno\^\i t Perthame\thanks{Sorbonne Universit{\'e}, CNRS, Universit\'{e} de Paris, Inria, Laboratoire Jacques-Louis Lions UMR7598, F-75005 Paris. 
		Email : Benoit.Perthame@sorbonne-universite.fr}
}
\date{\today}
\begin{document}
\maketitle
	\begin{abstract} 
		Both compressible and incompressible porous medium models are used in the literature to describe the mechanical properties of living tissues. These two classes of models can be related using a  stiff pressure law. In the incompressible limit, the compressible model generates a free boundary problem of Hele-Shaw type where incompressibility holds in the saturated phase. 
		
		Here we consider the case with a nutrient. Then, a badly coupled system of equations describes the cell density number and the nutrient concentration. For that reason, the derivation of the free boundary (incompressible) limit was an open problem, in particular a difficulty is to establish the so-called \textit{complementarity relation} which allows to recover the pressure using an elliptic equation.  To establish the limit, we use two new  ideas. The first idea, also used recently for related problems, is to extend the usual  Aronson-B\'enilan estimates in $L^\infty$ to an $L^2$ setting. The second idea is to derive a sharp uniform $L^4$ estimate on the pressure gradient, independently of space dimension.
		
	\end{abstract} 
	\vskip .7cm
	
	\noindent{\makebox[1in]\hrulefill}\newline
	2010 \textit{Mathematics Subject Classification.} 35B45; 35K57;   35K65; 35Q92; 76N10;  76T99; 
	\newline\textit{Keywords and phrases.} Porous medium equation; Tumor growth; Aronson-B\'enilan estimate; Free boundary; Hele-Shaw problem; 
	
\section*{Introduction}
	We consider a compressible mechanical model of tumor growth, where the cell motion is driven by the pressure gradient according to Darcy's law. The cell proliferation is governed by a bio-mechanical form of contact inhibition, that prevents cell division when the total cell density exceeds a critical threshold. 
	The evolution of the cell population density $n\geq 0$ and the concentration of nutrients $c\geq 0$ are described by the following type of system 
	\begin{equation}\label{sys}
	\begin{cases}
	\partial_t n - \diver(n \nabla p) = n G(p, c),\qquad \qquad x \in \R^d, \;  t\geq 0,\\[5pt]
	\partial_t c - \Delta c + n H(c) = (c_B-c)K(p),\\[5pt]
	c(x,t) \rightarrow c_B \quad \text{for }  x \rightarrow \infty.
	\end{cases}
	\end{equation}
	The pressure within the tissue is denoted by $p$, and in the compressible setting, we use for simplicity the following law of state
	\begin{equation}\label{pressure law}
	p= n^\gamma, \qquad  \gamma > 1.
	\end{equation}
	The reaction term $G(p,c)$ is the cell division rate and the lowest value of pressure that prevents cell division is called \textit{homeostatic pressure}, and we denote it by $p_H$. 
	The concentration $c_B>0$ is the level of nutrients at the source, namely the network of blood vessels. Here, we consider the vascular phase of tumor growth, after \textit{angiogenesis} has occurred, therefore the vasculature is present both outside and inside the tumor.
	The term $K\geq 0$ is the rate of nutrient release, which decreases with respect to the pressure. As clinical observation have shown, the mechanical stress generated by the cells shrinks the vessels inside the tumor and effects the blood flow and, by consequence, the nutrients  delivery, see \cite{MMACCL} for further details. Finally, the term $H\geq 0$ is an increasing function of $c$ and represents the consumption rate of the nutrient by the tumor cells.
	
	The specific form of the reaction term in the equation on $c$ is not fully relevant for our analysis, and we only need the possibility to derive some generic a priori estimates, mostly in $L^2$.  Our study covers, for example, the terms  in \cite{PQV} where the authors take $H=H(p,c)$, $K=0$ and those in \cite{PTV} where $K=\mathds{1}_{\{n=0\}}$, since the authors are considering the avascular phase of tumor growth. For our study, only some general  conditions are needed, which are detailed in the next sections.
	
\paragraph*{Motivation and previous works.}
	Models of tumor growth, including \eqref{sys}, possibly with more biological relevance, have been widely used recently. Several surveys are available, as \cite{RCM}. Numerical schemes for the model at hand, with AP property (asymptotic preserving), have been proposed in \cite{LTWZ}. 

	Mechanical models of tumor growth are focused on the effect of the internal pressure which governs the dynamics of the cell population density. This kind of description was initiated in \cite{Green} by Greenspan and further developed by Byrne and Chaplain, \cite{BC96}, Friedman, \cite{Fri}, and Lowengrub et al., \cite{LFJCMWC}, among the others.
	The leading assumption is that the birth of a cell generates a mechanical stress on the surrounding cells which start to move under a gradient of pressure. By consequence, the motion of the cells is usually described by Darcy's law
	\begin{equation}
	\label{Darcy}
	\vec{v}=-\nabla p,
	\end{equation}
	which relates the velocity to the pressure gradient. This type of models have been extensively used to describe the early stage of tumor growth, the so-called \textit{avascular phase}, see for example \cite{BCGR,byrneChaplain96,SCh}. 
	Models of tumor growth that include the effect of viscosity, \cite{PV, DeSc, RJPJ}, or more than one species of tissue cells, \cite{CFSS,LLP}, are also well-developed. 
	For a comprehensive review on this topic we refer the reader to \cite{Fri,LFJCMWC,PT,RSCB}.
	
	The equation for the density in the system \eqref{sys} is based on the continuous mechanical model presented in \cite{BD}, in which the dynamics of tumor growth are governed by competition for space and contact inhibition. The \textit{homeostatic pressure} is determined by the maximum level of stress that the cells can tolerate, see \cite{BD} for further details on the individual-based model that leads to the continuous one. 
	
	As explained above, this type of models are usually referred to as \textit{compressible}, since they relate the density and the pressure through a compressible constitutive law, in a fluid mechanical view. 
	A second class of models commonly used to describe tumor growth are free boundary problems, \cite{frihie}.
	They are also called geometric or \textit{incompressible} models and describe the tumor as a moving domain where the density is constant. Free boundary problems arise also from the theory of mixture applied to tumor growth, \cite{BKMP2,BP}.
	
	Building a link between these two classes of models has attracted the attention of many researchers in recent years. This result has first been achieved in \cite{PQV} for a purely mechanical model, passing to the so-called \textit{incompressible limit}, as the pressure becomes stiff. Later, it has been studied for a lot of models, which included 
viscosity \cite{PV,DeSc}, different laws of state \cite{DHV} and more than one species of cells \cite{BPPS}.
	In each case the limit model turns out to be a free boundary model of Hele-Shaw type. 
	
	Our goal is to study the limit $\gamma \to \infty$ in the law of state \eqref{pressure law}, and prove that the limit solution satisfies a free boundary problem. It has been proved in~\cite{PQV} that (the norms are specified in the next section and we now use the notation $n_\gamma, \,p_\gamma, \, \cgam$ in place of $n, \, p, \, c$ to indicate the dependency upon~$\gamma$)
	\[
	n_\gamma \to n_\infty, \qquad p_\gamma \to p_\infty, \qquad c_\gamma \to c_\infty,
	\]
	and the limits satisfy the system
	\begin{equation}\label{syslim}
	\begin{cases}
	\partial_t n_\infty - \diver(n_\infty \nabla p_\infty) = n_\infty G(p_\infty, c_\infty),\qquad \qquad x \in \R^d, \;  t\geq 0,\\[5pt]
	\partial_t c_\infty - \Delta c_\infty + n_\infty H(c_\infty) = (c_B-c_\infty)K(p_\infty),\\[5pt]
	c_\infty(x,t) \rightarrow c_B \quad\text{ for } x \rightarrow \infty,
	\end{cases}
	\end{equation}
	with a \textit{graph} relation between $p_\infty$ and $n_\infty$ given by  
	\begin{equation}\label{pressurelilmit}
	0\leq n_\infty \leq 1, \qquad  p_\infty (n_\infty-1)=0  .
	\end{equation}
	A remarkable result is the uniqueness of the weak solutions of this system.
	
	However, it was left open in~\cite{PQV} to establish the so-called \textit{complementarity condition}, which reads (in the sense of distributions)
	\begin{equation}\label{crd}
	p_\infty \big(\Delta p_\infty + G(p_\infty, c_\infty) \big)=0 \quad\text{ in }\quad \D'(\R^d\times (0,\infty)),
	\end{equation}
	which follows formally from the equation on $n$ written for the pressure, namely
	\begin{equation}\label{pressure}
	\partial_t \p = \gamma \p \big(\Delta \p + G(\p,\cgam) \big) +|\nabla \p|^2.
	\end{equation}
	The complementarity condition is fundamental because it relates the weak solutions defined by the equations \eqref{syslim} and \eqref{pressurelilmit} to the geometric form of the Hele-Shaw problem, where the set ${\cal O}(t):=\{x;\, p(x,t) >0\}$ evolves with the speed determined by the normal component of $\nabla p_\infty$. The limit pressure is a solution to the elliptic equation with Dirichlet boundary conditions
	\[
	-\Delta p_\infty = G(p_\infty , c_\infty ) \qquad \text{in }\; {\cal O}(t)=\{x;\, p_\infty(x,t) >0\}.
	\]
	The Hele-Shaw problem is a widely studied free boundary model. Although we are only interested in the weak formulation, the regularity of the boundary is also a challenging issue, see \cite{SalsaCaffa,HS,MePeQu}.
	\paragraph*{Difficulties and strategies.}
	To handle this problem, we make use of two new estimates  which hold because the cell population density satisfies a Porous Medium Equation, which reads
	\begin{equation}
	\label{pme}
	\partial_t n_\gamma -\frac{\gamma}{\gamma +1}\Delta n_\gamma^{\gamma+1} = n_\gamma G(p_\gamma,c_\gamma).
	\end{equation}
	$\bullet$ The first estimate results from the famous Aronson-B\'enilan (AB in short)  inequalities for the porous media, \cite{AB, CrPi1982}, which have been extended in various contexts (see \cite{VaVi} for another example). It was used in the purely mechanical case, \cite{PQV}, and it gives the lower bound $\Delta p_\gamma(t) +G( p_\gamma(t)) \geq - C/\gamma t$, with $C$ positive constant. Here, unlike in the case without nutrients, it cannot hold. In fact, as shown in \cite{PTV}, where a semi-explicit travelling wave solution was found, there exists a region where $p$ is constantly equal to zero and $G$ is negative.
	
	Therefore, we show a weaker, but still sufficient, condition
	\[
	\int_0^T \int_{\R^d} |\min (0, \Delta p_\gamma )|^3 \leq C(T).
	\]
	This is proved by working in $L^2$, rather than with a sub-solution, as it has been recently initiated in \cite{GPS, BPPS}. This method has the advantage to be compatible with the $L^2$ estimates on $c_\gamma$ and its derivatives. We recall here that $\Delta p_\infty$ is a bounded measure due to the free boundary of the set ${\cal O}(t)$ where the pressure is positive.
	\\
	$\bullet$ The second new estimate is an $L^4$ bound on $\nabla p_\gamma$, independent of the dimension $d$. In the simple case, where $G$ depends only on $p$, it results from the kinetic energy relation combined to the AB inequality in $L^\infty$, which is wrong here. We have a new and more general way to derive it, independently of the AB inequality.
	\paragraph*{Plan of the paper.}
	The paper is organized as follows. The next section is devoted to explain the notations and assumptions and to state the main result of the paper, namely that the complementarity condition holds. The rest of the paper is dedicated to prove this result. We begin in Section~\ref{Sec:est1} presenting standard bounds which are useful for deriving the main new estimates that are stated and proved in Section~\ref{Sec:est2}. Finally, in Section~\ref{Sec:4} we give the proof of the complementarity relation. 
	
\section{Notations, assumptions and main result}\label{sec:NAMR}
	%
	%
	\paragraph*{Notations.}
	We denote $Q=\R^d \times (0,\infty)$, and for $T>0$ we set $Q_T=\R^d\times (0,T)$. We frequently use the abbreviation form $n(t):=n(x,t),\, p(t):=p(x,t),\, c(t):=c(x,t)$.  We denote 
	\begin{equation*}
	\sign_+{\{w\}}=\mathds{1}_{\{w>0\}}\quad \text{ and } \quad \sign_-{\{w\}}=-\mathds{1}_{\{w<0\}}.
	\end{equation*}
	We also define the positive and negative part of $w$ as follows
	\begin{equation*}
	|w|_+ :=
	\begin{cases}
	w, &\text{ for } w>0,\\
	0, &\text{ for } w\leq 0,
	\end{cases}
	\quad  \text{ and }\quad
	|w|_- :=
	\begin{cases}
	-w, &\text{ for } w<0,\\
	0, &\text{ for } w\geq 0.
	\end{cases}
	\end{equation*}
	%
	%
\paragraph*{Assumptions.}
	Considering the growth/reaction terms,  the functions $G$, $H$ and $K$ are assumed to be smooth and
	we make the following assumption. There exist positive constants $\beta$, $p_H$, $p_B$ (reference pressure of blood vessels) such that
	\begin{align}\label{ReTe}
	\partial_p G &<-\beta, & \partial_c G&\geq 0, & G(p,c_B)\leq 0,& \text{ for } p\geq p_H,\\
	K'(p) &\leq 0,    & 0 \leq K(p) &\leq 1,  & \quad K(p)=0,& \text{ for } p\geq p_B,\\
	H'(c) &\geq 0,   & 0 \leq H(c) &,  & H(0)=0 ,& 
	\end{align}
	Furthermore, for a given pressure $p$, $G(p,c)<0$ for $c$ small enough. This assumption indicates that the tumor cells die by \textit{necrosis} when the concentration of nutrients is below a survival threshold. 
	
	Some standard choices for the reaction terms are
	\begin{align*}
	G(p,c)&=g(p)(c+c_1)-c_2, & H(c)&=c, & K(p)&=\left|1-\frac{p}{p_B}\right|_+,
	\end{align*}
	where $c_{1,2}$ are positive constants and $g$ is a decreasing function of $p$, see \cite{CBCB, MMACCL, PQV}. 
	%
	%
	\paragraph*{Initial data.} The system \eqref{sys} is completed with initial data. We assume that for some $n^0, c^0$, the initial data $n^0_\gamma, \cgam^0$ satisfy
	\begin{align}
	\label{inf n}  &0\leq n^0_\gamma \leq n_H:=p_H^{1/\gamma}, &\|n^0_\gamma-n^0\|_{L^1(\R^d)}\xrightarrow[\gamma    \rightarrow \infty]{}0, \qquad \qquad &n^0 \in L_+^1(\R^d), \\
	\label{inf c}  &0\leq \cgam^0\leq c_B,  &\|c^0_\gamma-c^0\|_{L^1(\R^d)}\xrightarrow[\gamma  \rightarrow \infty]{}0,\qquad \qquad & c^0-c_B \in L_+^1(\R^d).
	\end{align}
	We also assume that there is a positive constant $C$ such that
	\begin{align}
	\label{2 deltap}         \|\nabla p^0_\gamma\|_{L^2(\R^d)} + \|\Delta p^0_\gamma\|_{L^2(\R^d)} & \leq C,\\
	\label{timezero} \|(\partial_t \n)^0\|_{L^1(\R^d)}+\|(\partial_t \cgam)^0\|_{L^1(\R^d)}&\leq C,\\
	\label{2 gradc}        \|\nabla \cgam^0\|_{L^2(\R^d)} & \leq C.
	\end{align}
	Set these conditions on the initial data, we give the definition of weak solution of the system \eqref{sys} as follows.
	\begin{definition}
		Given $T>0$, a weak solution of the system \eqref{sys} is a triple $(\n,\p,\cgam)$ such that, 
		\[
		\n, \p, \cgam \in L^{\infty}((0,T),L^p(\R^d)) \quad \forall p\geq 1, \qquad  \nabla \cgam , \; \nabla \p \in  L^{2}(\R^d \times (0,T)), 
		\]
		and for all $\varphi\in C^1_{\text{comp}}(\R^d \times [0,T))$,
		\begin{align*}
		&\int_0^T \int_{\R^d}\left(-\n \partial_t \varphi + \n \nabla\p\nabla\varphi-\n G(\p,\cgam)\varphi\right)=\int_{\R^d} \n^0 \varphi(0),\\
		&\int_0^T \int_{\R^d}\left(-c_\gamma \partial_t \varphi + \nabla c_\gamma \nabla\varphi +\n H(\cgam)\varphi - (c_B-c) K(p)\varphi\right)=\int_{\R^d} c_\gamma^0 \varphi(0).
		\end{align*}
	\end{definition}  
	From \cite{V} we know that a weak solution exists for all $T>0$.
	%
	%
\paragraph*{Compact support.}
	Because our arguments rely on technical calculations, we first simplify the setting assuming that there exists a smooth bounded  open domain $\Omega_0\subset \R^d$, independent of~$\gamma$, such that for all $\gamma > 1$
	$$
	\text{supp}(\n^0)\subset \Omega_0.
	$$
	Unlike the solutions of the heat equation, the PME's solutions have a finite speed of propagation, see \cite{V}. This means that, for all $T>0$, there exists a smooth bounded open domain $\Omega_T$ independent of $\gamma$ such that
	$$\
	\text{supp}(\n(t)) \subset \Omega_T, \qquad \forall t  \in [0,T],
	$$
	see Appendix~\ref{appA} for the proof.
	From now on, we  consider a solution $(\n,\p)$ with compact support for all $\gamma > 1$. In the Appendix~\ref{app}, we show how to extend the result  to more general solutions.
	\paragraph*{Main result.} We now state the main result of the paper, namely the weak formulation of the complementarity relation.
	\begin{thm}[Estimates and complementarity relation]\label{CR1}
		With all the previous assumptions, the limit pressure $p_\infty$ satisfies the relation~\eqref{crd}, that means, 
for all test functions $\zeta \in \D(Q)$, we have
\begin{equation*}
		\iint_{Q}  \left( -|\nabla p_\infty|^2\zeta - p_\infty\nabla p_\infty\nabla\zeta + p_\infty G(p_\infty,c_\infty)\zeta\right)=0.
		\end{equation*}
Furthermore the following estimates hold uniformly in $\gamma$	
\[
\int_0^T \int_{\Omega_T} | \Delta \p + G(\p,\cgam)|_-^3 \leq C(T), \qquad  \qquad  \int_0^T\int_{\Omega_T} |\nabla \p|^4 \leq C(T).
\]
	\end{thm}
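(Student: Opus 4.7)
The proof of Theorem~\ref{CR1} is organized in three stages: the two uniform estimates and then the passage to the limit in~\eqref{pressure}.

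\textit{Stage 1: $L^3$ bound on $|w_\gamma|_-$ where $w_\gamma := \Delta p_\gamma + G(p_\gamma,c_\gamma)$.}
Apply $\Delta$ to the pressure equation~\eqref{pressure} and add $\partial_p G\,\partial_t p_\gamma + \partial_c G\,\partial_t c_\gamma$. After rearrangement, using $\Delta p_\gamma = w_\gamma - G$ wherever it appears, one obtains a parabolic equation of the form
\begin{equation*}
\partial_t w_\gamma - \gamma p_\gamma \Delta w_\gamma - 2(\gamma+1)\nabla p_\gamma \cdot \nabla w_\gamma \;=\; \gamma w_\gamma^{\,2} + R_\gamma,
\end{equation*}
where $R_\gamma$ gathers lower-order terms (involving $|D^2 p_\gamma|^2$, $\nabla p_\gamma \cdot \nabla G$, $\partial_p G\,\partial_t p_\gamma$, and $\partial_c G\,\partial_t c_\gamma$) that are controlled by the $L^2$ a~priori estimates on $\nabla p_\gamma$, $D^2 p_\gamma$, $\nabla c_\gamma$ and $\partial_t c_\gamma$ collected in Section~\ref{Sec:est1}. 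Following the $L^2$ version of Aronson--B\'enilan introduced in \cite{GPS,BPPS}, I test this equation against $-|w_\gamma|_-$. After integration by parts, the diffusive term yields the nonnegative dissipation $\gamma\iint p_\gamma\bigl|\nabla |w_\gamma|_-\bigr|^{2}$; the reactive term $\gamma w_\gamma^{\,2}$ produces exactly $\gamma\iint|w_\gamma|_-^{\,3}$ on $\{w_\gamma<0\}$; the transport term is combined with the diffusion; and the remainder $R_\gamma$ is absorbed by Young's inequality. Integrating in time and using~\eqref{2 deltap} delivers the announced $L^3(Q_T)$ bound on $|w_\gamma|_-$, uniformly in $\gamma$.

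\textit{Stage 2: $L^4$ bound on $\nabla p_\gamma$.}
Since $G$ can be negative where $p_\gamma=0$, the $L^\infty$ version of Aronson--B\'enilan is unavailable, so the classical route via kinetic energy plus a one-sided bound on $\Delta p_\gamma$ is blocked. I derive the $L^4$ estimate directly through the algebraic identity
\begin{equation*}
\iint_{Q_T}|\nabla p_\gamma|^4 \;=\; -\iint_{Q_T} p_\gamma|\nabla p_\gamma|^{\,2}\Delta p_\gamma \;-\; 2\iint_{Q_T} p_\gamma\,(D^2 p_\gamma\,\nabla p_\gamma)\cdot\nabla p_\gamma,
\end{equation*}
obtained by two spatial integrations by parts. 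Writing $\Delta p_\gamma = w_\gamma - G$, the only potentially dangerous contribution is $\iint p_\gamma|\nabla p_\gamma|^2|w_\gamma|_-$: by Cauchy--Schwarz and H\"older on the compact support $\Omega_T$, this is bounded by $p_H\,\bigl(\iint|\nabla p_\gamma|^4\bigr)^{1/2}\bigl(\iint|w_\gamma|_-^{\,3}\bigr)^{1/3}|\Omega_T|^{1/6}$, which is under control thanks to Stage~1. The Hessian contribution is handled via the Bochner identity $\int|D^2 p_\gamma|^2=\int(\Delta p_\gamma)^2$ together with $p_\gamma\le p_H$. A Young absorption on $\iint|\nabla p_\gamma|^4$ closes the estimate with a constant independent of $\gamma$ and of the dimension~$d$.

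\textit{Stage 3: passage to the limit.}
Dividing \eqref{pressure} by $\gamma$ and testing against $\zeta\in\D(Q)$,
\begin{equation*}
\iint_Q p_\gamma\bigl(\Delta p_\gamma + G(p_\gamma,c_\gamma)\bigr)\zeta \;=\; -\frac{1}{\gamma}\iint_Q p_\gamma\,\partial_t\zeta \;-\; \frac{1}{\gamma}\iint_Q|\nabla p_\gamma|^2\zeta.
\end{equation*}
The right-hand side vanishes as $\gamma\to\infty$ because $p_\gamma$ is uniformly bounded in $L^\infty$ and $\nabla p_\gamma$ in $L^2$. On the left, one integration by parts gives
\begin{equation*}
\iint_Q p_\gamma\Delta p_\gamma\,\zeta \;=\; -\iint_Q|\nabla p_\gamma|^2\zeta \;-\; \iint_Q p_\gamma\nabla p_\gamma\cdot\nabla\zeta.
\end{equation*}
The uniform $L^4(Q_T)$ estimate of Stage~2, combined with standard parabolic compactness (Aubin--Lions, or Vitali using uniform integrability), provides strong convergence $\nabla p_\gamma\to\nabla p_\infty$ in $L^2_{\mathrm{loc}}(Q)$. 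Consequently $|\nabla p_\gamma|^2\zeta\to|\nabla p_\infty|^2\zeta$ and $p_\gamma\nabla p_\gamma\cdot\nabla\zeta\to p_\infty\nabla p_\infty\cdot\nabla\zeta$, while $p_\gamma G(p_\gamma,c_\gamma)\zeta\to p_\infty G(p_\infty,c_\infty)\zeta$ follows from the convergences already obtained in~\cite{PQV}. Passing to the limit gives exactly the weak formulation of the complementarity relation stated in Theorem~\ref{CR1}.

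\textit{Main obstacle.} The decisive step is Stage~2: producing a uniform $L^4$ bound on $\nabla p_\gamma$ in the absence of the $L^\infty$ Aronson--B\'enilan inequality. The argument relies crucially on the $L^3$ estimate of Stage~1 to tame the negative part of $w_\gamma$, and it is this tight interplay between the two new estimates that opens the way to the distributional complementarity relation.
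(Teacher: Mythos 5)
Your Stage~1 (the $L^3$ Aronson--B\'enilan bound on $|w_\gamma|_-$) and Stage~3 (passage to the limit using the weak form of the pressure equation, strong $L^2$-convergence of $\nabla p_\gamma$ via Aubin--Lions and the $L^4$ bound) follow essentially the same route as the paper, modulo some loose bookkeeping (e.g.\ the term $|D^2 p_\gamma|^2$ is not ``controlled by an $L^2$ a priori estimate'' --- no such bound exists at that stage --- but it enters with a favorable sign and can simply be dropped, or lower-bounded by $\tfrac1d (\Delta p_\gamma)^2$ as the paper does to get the sharper threshold $\gamma>2-\tfrac4d$).

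Stage~2, however, contains a genuine gap. Starting from
\begin{equation*}
\iint_{Q_T}|\nabla p_\gamma|^4 = -\iint_{Q_T} p_\gamma|\nabla p_\gamma|^2\Delta p_\gamma - 2\iint_{Q_T} p_\gamma\,(D^2 p_\gamma\,\nabla p_\gamma)\cdot\nabla p_\gamma,
\end{equation*}
your H\"older argument treats the contribution of $|w_\gamma|_-$ correctly, but the Hessian term requires a uniform bound on $\iint p_\gamma^2|D^2 p_\gamma|^2$ (after Young's inequality). You invoke the Bochner identity $\int|D^2 p_\gamma|^2=\int(\Delta p_\gamma)^2$ together with $p_\gamma\le p_H$, but this leads nowhere: $\iint(\Delta p_\gamma)^2$ is not uniformly bounded in $\gamma$ (the available information is only $\iint|\Delta p_\gamma|\le C(T)$ from Stage~1 and the weighted bound below). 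Also, the Bochner identity holds without weight; once you insert the factor $p_\gamma^2$ the two integrals differ by commutator terms. The estimate you need, and which the paper proves as a separate energy inequality, is
\begin{equation*}
(\gamma-1)\iint_{Q_T} p_\gamma|\Delta p_\gamma+G|^2 \;+\; \iint_{Q_T} p_\gamma\sum_{i,j}(\partial^2_{i,j}p_\gamma)^2\;\le\;C(T).
\end{equation*}
This is obtained by multiplying the pressure equation~\eqref{pressure} by $-(\Delta p_\gamma+G(p_\gamma,c_\gamma))$, integrating in space, and using the identity
$\int|\nabla p_\gamma|^2\Delta p_\gamma=-\tfrac23\int p_\gamma(\Delta p_\gamma)^2+\tfrac23\int p_\gamma\sum_{i,j}(\partial^2_{i,j}p_\gamma)^2$,
together with the chain-rule trick $\partial_t p_\gamma\,G=\partial_t\overline G-\partial_t c_\gamma\,\partial_c\overline G$ to absorb the reaction term. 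Only after this energy bound does the Calder\'on--Zygmund-type identity you quote close, because $\iint p_\gamma^2|D^2 p_\gamma|^2\le p_H\iint p_\gamma\sum_{i,j}(\partial^2_{i,j}p_\gamma)^2$ and $\iint p_\gamma^2(\Delta p_\gamma)^2\le p_H\,\frac{C(T)}{\gamma-1}$. Without proving this dissipation estimate your Stage~2 does not establish the $L^4$ bound, and in fact you never use the $L^3$ bound on $|w_\gamma|_-$ from Stage~1 in the way you claim is ``decisive''; in the paper the $L^3$ bound is used mainly for the $L^1$ bound on $\Delta p_\gamma$ and hence for the compactness in Stage~3, while the $L^4$ estimate rests on the independent energy inequality above.
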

	%
	%
\section{Preliminary Estimates}
	\label{Sec:est1}
	
	Let $(\n, \p, \cgam)$ be a weak solution to the system \eqref{sys}.
	We recall some standard preliminary bounds on $\n, \p, \cgam$ and their derivatives, gathered in the following Proposition.
	%
	%
	\begin{prop}[Direct estimates]
		\label{prop1}
		Given $(\n, \p, \cgam)$ a weak solution of the system \eqref{sys} for $\gamma > 1$, and $T>0$, there exists a constant $C(T)$, independent of $\gamma$, such that for all $0\leq t\leq T$
		\begin{align}
		&0\leq \n \leq n_H, & &0\leq \p \leq p_H, & &0\leq \cgam \leq c_B,\\
		\label{L1bds}   
		&\|\n(t)\|_{L^1(\R^d)}\leq C(T), & &\|\p(t) \|_{L^1(\R^d)}\leq C(T), & &\|\cgam(t)-c_B\|_{L^1(\R^d)} \leq C(T), \\
		\label{2 c}  &\|\nabla\cgam(t)\|_{L^2(\R^d)} \leq C(T), &  &\|\Delta\cgam\|_{L^2(Q_T)} \leq C(T), &  &\|\partial_t \cgam \|_{L^2(Q_T)} \leq C(T),\\
		\label{timeder}
		&\|\partial_t \n \|_{L^1(Q_T)} \leq C(T),&  &\|\partial_t \p \|_{L^1(Q_T)} \leq C(T),&  &\|\partial_t \cgam \|_{L^1(Q_T)} \leq C(T), \\
		\label{c4p2}   &\|\nabla\cgam\|_{L^4(Q_T)} \leq C(T), & &\|\nabla\p\|_{L^2(Q_T)} \leq C(T).
		\end{align}
	\end{prop}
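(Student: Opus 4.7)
The estimates of Proposition~\ref{prop1} are classical parabolic a priori bounds, each independent of~$\gamma$, and I would establish them in roughly the stated order, each step building on the previous ones.

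The pointwise bounds come from the comparison principle: for $\n$, the constant $n_H$ is a supersolution since at $n=n_H$ one has $p=p_H$ and $G(p_H,\cdot)\le 0$ by \eqref{ReTe}; for $\cgam$, the constants $0$ and $c_B$ are respectively a sub- and a supersolution, using $H(0)=0$ together with $c_BK(\p)\ge 0$, and $\n H(c_B)\ge 0$ together with $(c_B-c_B)K=0$. Spatial integration and Gr\"onwall then give the $L^1$ bounds: for $\n$ directly from $\tfrac{d}{dt}\|\n\|_{L^1}\le \|G\|_\infty\|\n\|_{L^1}$, for $\p$ from $\p\le p_H\mathbf{1}_{\Omega_T}$, and for $w:=c_B-\cgam\ge 0$ by integrating $\partial_t w-\Delta w=\n H(\cgam)-wK(\p)$ and discarding the non-positive term.

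For the $L^2$ bounds on the derivatives of $\cgam$ I would use the energy method: multiplying the $c$-equation by $\partial_t\cgam$ and integrating, Young absorption based on the $L^\infty$ bounds (with the source already controlled in $L^2$ by the previous step) yields simultaneously the $L^\infty_t L^2_x$ bound on $\nabla\cgam$ and the $L^2(Q_T)$ bound on $\partial_t\cgam$. The $L^2$ bound on $\Delta\cgam$ then follows algebraically from the equation, and the $L^4(Q_T)$ bound on $\nabla\cgam$ from the dimension-independent inequality
\begin{equation*}
\|\nabla\cgam\|_{L^4(\R^d)}^4\le C\,\|\cgam\|_{L^\infty}^2\,\|D^2\cgam\|_{L^2(\R^d)}^2,
\end{equation*}
proved by one integration by parts and Cauchy--Schwarz. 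For $\|\nabla\p\|_{L^2(Q_T)}$ I would integrate the pressure equation \eqref{pressure} in $x$; integration by parts on $\gamma\p\Delta\p$ combines with the $|\nabla\p|^2$ term to yield
\begin{equation*}
(\gamma-1)\int_0^T\!\!\int_{\R^d}|\nabla\p|^2\,\le\,\|\p^0\|_{L^1}+\gamma\int_0^T\!\!\int_{\R^d}\p\,G(\p,\cgam),
\end{equation*}
whose right-hand side is $O(\gamma)$ on the uniform support $\Omega_T$, which gives the uniform bound after dividing by $\gamma-1$.

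The $L^1$ time-derivative bounds are, in my view, the main difficulty. For $\partial_t\n$ and $\partial_t\cgam$ I would differentiate the first two equations of \eqref{sys} in time and test the resulting linear equations against smooth approximations of $\sign(\partial_t\n)$ and $\sign(\partial_t\cgam)$; the parabolic Kato inequality then produces the coupled estimate
\begin{equation*}
\tfrac{d}{dt}\bigl(\|\partial_t\n\|_{L^1}+\|\partial_t\cgam\|_{L^1}\bigr)\le C\bigl(\|\partial_t\n\|_{L^1}+\|\partial_t\cgam\|_{L^1}\bigr),
\end{equation*}
closed by Gr\"onwall and the initial bound \eqref{timezero}. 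The genuinely delicate point is $\|\partial_t\p\|_{L^1(Q_T)}\le C$ uniformly in~$\gamma$: the naive identity $\partial_t\p=\gamma\n^{\gamma-1}\partial_t\n$ costs an uncontrollable factor~$\gamma$, and no initial $L^1$ bound on $\partial_t\p^0$ is assumed. I would instead apply the same sign-test strategy directly to the pressure equation \eqref{pressure} differentiated in time: its principal part $\gamma\p\Delta(\partial_t\p)+2\nabla\p\cdot\nabla(\partial_t\p)$ has the favourable sign when tested against a regularisation of $\sign(\partial_t\p)$, and the remaining lower-order terms can then be absorbed using the $L^2$ bounds on $\nabla\p$, $\nabla\cgam$ and the $L^1$ bounds on $\partial_t\n$, $\partial_t\cgam$ obtained in the previous steps.
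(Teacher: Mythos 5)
Most of your steps track the paper's argument closely: comparison principle for the $L^\infty$ bounds, $L^1$ bounds by integration, energy estimates for the derivatives of $\cgam$ (the paper tests against $-\Delta\cgam$ rather than $\partial_t\cgam$, but the outcome is the same), the integration-by-parts identity for $\|\nabla\cgam\|_{L^4}$, and the $L^2$ bound on $\nabla\p$ by integrating \eqref{pressure}. Your instinct that the uniform $L^1$ bound on $\partial_t\p$ is the delicate point is also right. But your proposed route to it does not close, and there is a circularity in your ordering.

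\textbf{The $\partial_t\p$ step fails as proposed.} Differentiating \eqref{pressure} in time gives
\begin{equation*}
\partial_{tt}\p = \gamma\,\partial_t\p\,(\Delta\p+G) + \gamma\p\,\Delta(\partial_t\p) + \gamma\p\,\partial_p G\,\partial_t\p + \gamma\p\,\partial_c G\,\partial_t\cgam + 2\nabla\p\cdot\nabla(\partial_t\p).
\end{equation*}
You observe, correctly, that the second and last terms have the right sign after testing against $\sign(\partial_t\p)$ and integrating (Kato). But the first term, $\gamma\,\partial_t\p\,(\Delta\p+G)$, is \emph{not} lower order and cannot be absorbed: tested against $\sign(\partial_t\p)$ it becomes $\gamma|\partial_t\p|(\Delta\p+G)$, and $\Delta\p$ is not uniformly in $L^\infty$ (it has a jump near the free boundary as $\gamma\to\infty$). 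Even replacing $\Delta\p$ by a constant would yield a Gr\"onwall factor $e^{\gamma\|G\|_\infty T}$, hence no uniform bound. The term $\gamma\p\,\partial_c G\,\partial_t\cgam$ similarly contributes a factor of order $\gamma$ after integration against the $L^1$ bound on $\partial_t\cgam$, with nothing of matching structure to absorb it (the good term $-\gamma\beta\p|\partial_t\p|$ involves $\partial_t\p$, not $\partial_t\cgam$). The paper avoids all of this by never differentiating the pressure equation: integrating the $\partial_t\n$ inequality over $Q_t$ rather than merely in $x$, the dissipation coming from $\partial_p G<-\beta$ yields $\iint_{Q_t}\n|\partial_t\p|\le C(T)$, and then one splits $\|\partial_t\p\|_{L^1(Q_T)}$ over $\{\n\le 1/2\}$ (where $\partial_t\p=\gamma\n^{\gamma-1}\partial_t\n$ and $\gamma\n^{\gamma-1}\le\gamma 2^{-(\gamma-1)}\le C$ uniformly) and $\{\n\ge 1/2\}$ (controlled by $2\iint\n|\partial_t\p|$). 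That splitting is the key idea you are missing.

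\textbf{Ordering is circular.} You propose to prove the coupled $L^1$ bound on $\partial_t\n$, $\partial_t\cgam$ first and then use it to obtain $\partial_t\p$. But differentiating the $c$-equation in time produces the term $(c_B-\cgam)K'(\p)\partial_t\p$, so the $L^1$ estimate for $\partial_t\cgam$ already requires control of $\|\partial_t\p\|_{L^1}$. The paper's order is: $\|\partial_t\n\|_{L^1}$ alone (using the already established $L^2$ bound on $\partial_t\cgam$, not its $L^1$ bound), then $\|\partial_t\p\|_{L^1}$ from the dissipation, then $\|\partial_t\cgam\|_{L^1}$ last. You should follow that order.
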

	For the sake of completeness, we now recall the derivation of these bounds.
	
	%
	%
\paragraph*{$L^\infty$ bounds for $\n, \p, \cgam$.} 
	The $L^\infty$ bounds are just consequences of our assumptions on $G$ using the comparison principle. 
	\commentout{
		From equation \eqref{pme} we have
		\begin{equation*}
		\partial_t (\n-n_H)-\frac{\gamma}{\gamma +1} \Delta (\n^{\gamma+1}-n_H^{\gamma+1})=(\n-n_H)G(\p,\cgam)+n_H G(\p,\cgam).
		\end{equation*}
		Multiplying by $\sign_+\{\n-n_H\}$ we obtain
		\begin{align*}
		\partial_t |\n-n_H|_+-\frac{\gamma}{\gamma +1} \Delta (|\n^{\gamma+1}-n_H^{\gamma+1}|_+)\leq& G(\p,\cgam) |\n-n_H|_+\\
		&\;+n_H(G(\p,\cgam)-G(p_H,\cgam))\sign_+\{\n-n_H\}.
		\end{align*}
		since, thanks to the assumptions on $G$, we have $G(p_H,\cgam)\leq 0$.
		
		Integrating in space yields
		\begin{equation*}
		\frac{d}{dt} \int_{\R^d}|\n-n_H|_+ \leq \|G\|_\infty \int_{\R^d}|\n-n_H|_+,
		\end{equation*}
		because $(G(\p,\cgam)-G(p_H,\cgam))\sign_+\{\n-n_H\}\leq 0$, since $G$ is decreasing with respect to $\p$.
		By the assumption \eqref{inf n} and thanks to Gronwall's lemma, we find $\n \leq n_H$ and therefore $\p\leq p_H$.
		
		Using the same argument with the $\sign_-\{\n\}$ we obtain
		\begin{equation*}
		\frac{d}{dt} \int_{\R^d}|\n|_- \leq C \int_{\R^d}|\n|_-.
		\end{equation*}
		By Gronwall's lemma we deduce
		\begin{equation*}
		\int_{\R^d}|\n|_- \leq C \int_{\R^d} |n^0|_-,
		\end{equation*}
		and, since the initial data is positive by the assumption \eqref{inf n}, this yields $\n\geq 0$ and $\p\geq 0$.
		
		The same argument applies to $\cgam$ and then we have $\cgam\geq 0$.
		From the equation for $\cgam$ it holds
		\begin{align*}
		\partial_t |\cgam -c_B|_+ -\Delta (|c-c_B|_+)\leq &-\n H(\cgam)\sign_+\{\cgam-c_B\} - K(p) |\cgam-c_B|_+.
		\end{align*}
		Since $H, K$ and $\n$ are always positive, we get
		\begin{equation*}
		\partial_t |\cgam -c_B|_+ -\Delta (|c-c_B|_+)\leq 0.
		\end{equation*}
		This gives
		\begin{equation*}
		\frac{d}{dt}\int_{\R^d} |c-c_B|_+ \leq 0    
		\end{equation*}
		and since $c^0\leq c_B$, by the assumption \eqref{inf c},  we conclude that $c_\gamma \leq c_B$.
	}
	%
	%
\paragraph*{$L^1$ bounds on $\n, \p, \cgam$.}
	These are also standard estimates, noting that 
	\[
	\| p(t) \|_{L^1(\R^d)} = \| n(t) p(t)^\frac{\gamma- 1}{\gamma}  \|_{L^1(\R^d)} \leq p_H^\frac{\gamma- 1}{\gamma} \| n(t) \|_{L^1(\R^d)}. 
	\]
	\commentout{Arguing as before, thanks to the assumptions \eqref{inf n} and \eqref{inf c}, we can control the $L^1$ norms of both the negative and positive part of $\n,\p$ and $\cgam-c_B$. Hence
		\begin{equation*}
		\| \n(t)\|_{L^1(\R^d)}, \| \p(t) \|_{L^1(\R^d)}, \|\cgam(t)-c_B\|_{L^1(\R^d)}\leq C(T), 
		\end{equation*}
		for every $0\leq t\leq T.$
	}
	%
	%
\paragraph*{$L^2$ bounds for the derivatives of $\cgam$.}
	We now prove the $L^2$ bounds for $\nabla \cgam, \Delta \cgam$ and $\partial_t \cgam$. We multiply the equation for $\cgam$ by $-\Delta \cgam$ and we integrate in space and time
	\begin{equation*}
	- \int_0^t\int_{\R^d} \partial_t \cgam \Delta \cgam +\int_0^t\int_{\R^d} |\Delta \cgam|^2 = \int_0^t\int_{\R^d}(\n H(\cgam)-(c_B-c)K(\p))\Delta \cgam.
	\end{equation*}
	Integrating by parts and using Young's inequality we obtain
	\begin{equation*}
	\int_0^t \int_{\R^d} \partial_t(\nabla \cgam) \nabla \cgam +\int_0^t\int_{\R^d} |\Delta \cgam|^2 \leq \int_0^t\int_{\R^d}\frac{|\n H(\cgam)-(c_B-c)K(\p)|^2}{2}+ \int_0^t\int_{\R^d}\frac{|\Delta \cgam|^2}{2}.
	\end{equation*} 
	Hence, we have
	\begin{equation*}
	\frac{1}{2}\int_{\R^d} |\nabla \cgam(t)|^2  +\frac{1}{2}\int_0^t\int_{\R^d} |\Delta \cgam|^2 \leq C \int_0^t \left(\|\n(s)\|^2_{L^1(\R^d)}+\|\cgam(s)-c_B\|^2_{L^1(\R^d)}\right)ds+ \frac{1}{2}\|\nabla \cgam^0\|^{2}_{L^2({\R^d})},
	\end{equation*} 
	where $C$ is a positive constant depending on $n_H, c_B$ and the $L^\infty$ norms of $H$ and $K$.
	
	Finally, using the $L^1$ bounds \eqref{L1bds}, we obtain
	\begin{align*}
	\int_{\R^d} |\nabla \cgam(t)|^2  +\int_0^t\int_{\R^d} |\Delta \cgam|^2 \leq C(T) + \|\nabla \cgam^0\|^{2}_{L^2({\R^d})} ,
	\end{align*}
	for $0<t\leq T$, and thanks to \eqref{2 gradc} we conclude the proof of the first and second estimates in \eqref{2 c}.
	
	At last, considering the equation for $\cgam$
	\begin{equation*}
	\partial_t \cgam = \Delta \cgam -\n H(\cgam) + (c_B -\cgam)K(\p),
	\end{equation*}
	and using the previous bounds on $\n, \cgam$ and $\Delta \cgam$ we conclude that $\partial_t \cgam \in L^2(Q_T)$.
	%
	%
\paragraph*{$L^1$ bounds for the time derivatives of $\n$ and $\p$.}
	We differentiate the equation for $\n$ and we multiply it by $\sign{\{\partial_t \n\}}$
	\begin{equation}\label{dndt}
	\partial_t |\partial_t \n|-\gamma\Delta 
	(\n^{\gamma}|\partial_t \n|)\leq |\partial_t \n|G 
	+ \n \partial_p G |\partial_t \p|+
	\n \partial_c G \partial_t \cgam \sign\{\partial_t \n\}.
	\end{equation}
	We integrate in space using the monotonicity of $G$
	\begin{equation*}
	\frac{d}{dt}\|\partial_t \n(t)\|_{L^1(\R^d)}\leq \|G\|_{L^\infty(Q_T)}\|\partial_t \n(t)\|_{L^1(\R^d)}+\|\partial_c G\|_{L^\infty(Q_T)}\|\n(t)\|_{L^2(\R^d)}\|\partial_t\cgam(t)\|_{L^2(\R^d)}.
	\end{equation*}
	Thanks to \eqref{L1bds} and \eqref{2 c}, Gronwall's lemma gives
	\begin{equation*}	
	\|\partial_t \n(t)\|_{L^1(\R^d)}\leq C(T)  \|(\partial_t \n)^0\|_{L^1(\R^d)}\leq C(T),
	\end{equation*}
	where in the last inequality we used \eqref{timezero}.
	
	By integrating in $Q_t:=\R^d\times (0,t)$ the equation \eqref{dndt},  we obtain
	\begin{equation*}
	\|\partial_t \n(t)\|_{L^1(\R^d)}+\min|\partial_p G|\iint_{Q_t}\n |\partial_t \p| \leq C(T),	
	\end{equation*}
	thanks to \eqref{timezero} and the $L^1$ bounds proved above.
	Then, for the time derivative of the pressure it holds
	\begin{equation*}
	\|\partial_t \p\|_{L^1(Q_T)}\leq \iint_{Q_T \cap \{\n\leq 1/2\}}\gamma \n^{\gamma-1}|\partial_t \n| + 2 \iint_{Q_T \cap \{\n \geq 1/2\}} \n |\partial_t \p|\leq C(T).
	\end{equation*}
	We differentiate the equation for $\cgam$ and multiply it by $\sign{\{\partial_t \cgam\}}$
	\begin{equation*}
	\partial_t |\partial_t \cgam|-\Delta 
	(|\partial_t \cgam|)\leq -\partial_t \n H \sign\{\partial_t\cgam\} -\n H' |\partial_t\cgam|  -|\partial_t\cgam| K+(c_B-c)K' \partial_t \p \sign\{\partial_t\cgam\} . 
	\end{equation*}
	Integrating in space we obtain
	\begin{align*}
	\frac{d}{dt}\|\partial_t \cgam(t)\|_{L^1(\R^d)}\leq &\,\|H\|_{L^\infty(Q_T)}\|\partial_t \n(t)\|_{L^1(\R^d)}+ n_H \|H'\|_{L^\infty(Q_T)}\|\partial_t \cgam(t)\|_{L^1(\R^d)}\\&+c_B\|K'\|_{L^\infty(Q_T)}\|\partial_t p(t)\|_{L^1(\R^d)},
	\end{align*}
	and thanks to the previous bounds and Gronwall's lemma we have
	\begin{equation*}
	\|\partial_t \cgam(t)\|_{L^1(\R^d)}\leq C(T) \|(\partial_t \cgam)^0\|_{L^1(\R^d)}\leq C(T),
	\end{equation*}
	and this concludes the proof of \eqref{timeder}.
	%
	%
\paragraph*{$L^4$ bound for the gradient of $\cgam$.}
	Now, we prove that the gradient of $\cgam$ is bounded in $L^4$. Integration by parts gives
	\begin{align*}
	\int_{\R^d} |\nabla \cgam|^4 = - \int_{\R^d} \cgam \Delta \cgam |\nabla \cgam|^2 - \int_{\R^d} \cgam \nabla \cgam \cdot \nabla(|\nabla \cgam|^2).
	\end{align*}
	We use Young's inequality on the first term of the RHS and we get
	\begin{align*}
	\frac{1}{2} \int_{\R^d} |\nabla \cgam|^4 \leq \frac{1}{2} \int_{\R^d} \cgam^2 |\Delta \cgam|^2 - \int_{\R^d} \cgam \nabla \cgam \cdot \nabla(|\nabla \cgam|^2).
	\end{align*}
	We write the last term as
	\begin{align*}
	- \int_{\R^d} \cgam \nabla \cgam \cdot \nabla(|\nabla \cgam|^2) &= -2 \sum_{i,j} \int_{\R^d}\cgam\, \partial_i \cgam\, \partial_j \cgam\, \partial^2_{i,j} \cgam \\
	&\leq \frac{1}{4} \int_{\R^d} |\nabla \cgam|^4 + 4c_B^2\int_{\R^d}  \sum_{i,j}(\partial^2_{i,j}\cgam)^2
	\\
	&= \frac{1}{4} \int_{\R^d} |\nabla \cgam|^4 + 4c_B^2\int_{\R^d}   |\Delta \cgam|^2.  
	\end{align*}
	Thus, we have
	\[
	\frac{1}{4} \int_{\R^d} |\nabla \cgam|^4 \leq \left(\frac{1}{2}+4\right)  c_B^2\int_{\R^d}   |\Delta \cgam|^2 .
	\]
	and the $L^4$  estimate is proved.
	%
	%
\paragraph*{$L^2$ bound for the pressure gradient.}
	Since the pressure satisfies the equation~\eqref{pressure}, integrating  it in space we get
	\begin{align*}
	\frac{d}{dt}\int_{\R^d} \p (t)= -\gamma\int_{\R^d} |\nabla \p(t)|^2 + \gamma \int_{\R^d} \p (t)G(\p(t),\cgam(t)) +\int_{\R^d} |\nabla \p(t)|^2.
	\end{align*}
	Then, we integrate in time
	\begin{align*}
	(\gamma -1)\int_0^T\int_{\R^d} |\nabla \p|^2&=  \|\p(0)\|_{L^1(\R^d)}-\| \p(T)\|_{L^1(\R^d)} +\gamma \int_0^T\int_{\R^d} \p G(\p,\cgam),\\
	(\gamma -1)\int_0^T\int_{\R^d} |\nabla \p|^2&\leq C_0 + \gamma C(T),
	\end{align*}
	and this gives, since $\gamma >1$,
	$$
	\int_0^T\int_{\R^d} |\nabla \p|^2\leq  C(T).
	$$ 
	%
	%
\section{Stronger a priori estimates on $p_\gamma$}
	\label{Sec:est2}
	To establish the complementarity condition \eqref{crd} is equivalent to prove the strong compactness of $|\nabla p_\gamma |^2$. One step towards this goal is to prove compactness in space using the classical AB estimate, \cite{AB, CrPi1982}.  Here, major difficulties arise. As explained in the Introduction, since the reaction term can change sign the usual Aronson-B\'enilan lower bound cannot hold true, see \cite{PQV,PTV}. Moreover, we cannot apply the comparison principle because of the bad coupling in the system \eqref{sys}. Since the $L^\infty$ bound from below in the AB estimate is missing, we prove an $L^3$ version, adapting the method presented in~\cite{GPS}. Then, we show that the gradient of the pressure is bounded in $L^4(Q_T)$, which gives the compactness needed to pass to the limit.  
	
	Our first goal is to prove the AB estimate on the functional 
	\begin{equation}
	\label{w}
	w:= \Delta \p + G(\p,\cgam),
	\end{equation} 
	which is a variation of the Laplacian in order to take into account the source term, at the same order of $\Delta p_\gamma$, in equation~\eqref{pressure}. 
	
	\begin{thm}[Aronson-B\'enilan estimate in $L^3$]\label{Thm1}
		With the assumptions of Section~\ref{sec:NAMR} and with $\gamma >\max(1,2-\frac 4 d)$, for all  $T>0$ there is a constant $C(T)$   depending on $T$ and the previous bounds and independent of $\gamma$  such that  
		\begin{equation}\label{AB}
		\int_0^T \int_{\Omega_T} |w|_-^3 \leq C(T),\quad
		\int_0^T \int_{\R^d} |\Delta \p| \leq C(T).
		\end{equation}
	\end{thm}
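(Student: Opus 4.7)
The plan is to adapt the $L^p$-Aronson--B\'enilan method of \cite{GPS} to this coupled setting, i.e.\ to derive a closed energy-type inequality for $v:=|w|_-$, and then to extract the $L^1$ bound on $\Delta\p$ from a sign argument exploiting the compact support of $\p$. The first algebraic step is to produce the parabolic equation governing $w$. Differentiating \eqref{pressure} in time and using $\Delta(\p w)=w\Delta\p+2\nabla\p\cdot\nabla w+\p\Delta w$, $\Delta|\nabla\p|^2=2|D^2\p|^2+2\nabla\p\cdot\nabla\Delta\p$, together with the substitutions $\Delta\p=w-G$, $\partial_t\p=\gamma\p w+|\nabla\p|^2$, and $\partial_t G=\partial_pG\,\partial_t\p+\partial_cG\,\partial_t\cgam$, I reach
\begin{equation*}
\partial_t w-\gamma\p\Delta w-2(\gamma+1)\nabla\p\cdot\nabla w=\gamma w^2+\gamma w(\p\,\partial_pG-G)+2|D^2\p|^2+\mathcal R,
\end{equation*}
with $\mathcal R:=-\partial_pG|\nabla\p|^2-2\partial_cG\,\nabla\p\cdot\nabla\cgam+\partial_cG\,\partial_t\cgam$. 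The sign assumption $\partial_pG<-\beta$ makes both $\gamma w\p\partial_pG$ and $-\partial_pG|\nabla\p|^2$ favorable after testing against $-|w|_-$.

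I next test this equation against $-|w|_-$ (after a convex regularisation of $w\mapsto-|w|_-$ that justifies Kato's inequality) and integrate in space. Performing the two integrations by parts against the transport-diffusion operator, using the pointwise identity $\Delta\p=-v-G$ on $\{v>0\}$ to rewrite $\int\Delta\p\cdot v^2=-\int v^3-\int Gv^2$, and invoking the elementary Hessian inequality $|D^2\p|^2\geq(\Delta\p)^2/d=(v+G)^2/d$ on $\{v>0\}$, I obtain an energy inequality of the form
\begin{equation*}
\tfrac{1}{2}\tfrac{d}{dt}\int v^2+\gamma\int\p|\nabla v|^2+\Bigl(\tfrac{\gamma-2}{2}+\tfrac{2}{d}\Bigr)\int v^3\leq\int\mathcal F,
\end{equation*}
where $\mathcal F$ collects terms bounded pointwise by $Cv^2$, or of the form $v|f|$ with $f\in L^2(Q_T)$ or $L^4(Q_T)$ (coming from $G$, $\nabla\cgam$, $\partial_t\cgam$, $\nabla\p$) as supplied by Proposition~\ref{prop1}. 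The threshold $\gamma>2-4/d$ is exactly what makes the prefactor of $\iint v^3$ strictly positive, and Young's inequality $v|f|\leq\varepsilon v^3+C_\varepsilon|f|^{3/2}$ absorbs the remaining super-linear contributions into a small fraction of $\iint v^3$. A Gronwall step on $t\mapsto\int v^2(t)$ then closes the first estimate of~\eqref{AB} uniformly in $\gamma$.

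With the $L^3$ bound on $|w|_-$ in hand, the $L^1$ bound on $\Delta\p$ follows from a sign argument: since $\p(\cdot,t)$ has compact support in $\Omega_T$, integration by parts gives $\int_{\R^d}\Delta\p(t)=0$, hence $\int(\Delta\p)_+=\int(\Delta\p)_-$; then $\Delta\p=w-G$ together with $(\Delta\p)_-\leq|w|_-+\|G\|_{L^\infty}\mathbf 1_{\Omega_T}$ and H\"older combine with the first estimate of~\eqref{AB} to yield $\int_0^T\!\int_{\R^d}|\Delta\p|\leq C(T)$. The main obstacle I anticipate is the bookkeeping in the energy step: the dissipation $\gamma\int\p|\nabla v|^2$ is degenerate on $\{\p=0\}$, a set on which $v$ may nevertheless remain positive because $G(0,\cgam)<0$ is allowed, so producing the $v^3$ coercivity necessarily rests on the non-degenerate contribution $2\int|D^2\p|^2 v$; combining its coefficient with the $\frac{\gamma+2}{2}$ and $\gamma$ produced by the two integrations by parts is what forces the precise algebraic restriction $\gamma>2-4/d$.
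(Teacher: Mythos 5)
Your argument is essentially the paper's own: you derive the same parabolic equation for $w$ (algebraically equivalent, written here in non-divergence form rather than with $\Delta(p w)$ kept intact), test against $-|w|_-$, invoke the Hessian inequality $|D^2 p|^2 \geq (\Delta p)^2/d$, and arrive at the same coercivity constant $\tfrac{\gamma-2}{2}+\tfrac{2}{d}$ for $\iint |w|_-^3$, with the $\partial_p G<-\beta$ assumption providing the favorable $-\beta|\nabla p|^2|w|_-$ term to absorb the $\nabla p\cdot\nabla c$ cross term; the $L^1$ bound for $\Delta p$ via $\int_{\R^d}\Delta p(t)=0$ is the same zero-mean argument the paper uses. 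One small caution: the closing step should not be a literal Gronwall lemma on $t\mapsto\int v^2$, since the coefficient of $\int v^2$ on the right scales like $\gamma$ and a Gronwall factor $e^{C\gamma T}$ would destroy uniformity in $\gamma$; instead (as the paper does) one integrates in time, interpolates $\iint v^2 \leq |\Omega_T\times(0,T)|^{1/3}\bigl(\iint v^3\bigr)^{2/3}$, and closes by observing that the $\iint v^3$ and $\iint v^2$ prefactors both scale like $\gamma$ so the resulting bound is $\gamma$-independent — this is where the compact support is used in an essential way even for the first estimate, not only for the $\Delta p$ bound.
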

	%
	Let us point out that because the free boundary is where $p_\infty$ vanishes, it is important that $w$ itself is controlled and not merely $p w$ as in the next estimate.
	\begin{thm}[$L^4$ estimate on the pressure gradient]\label{L4}
		With the same assumptions as before, given $T>0$, it holds
		\begin{equation}\label{t4}
		(\gamma -1)    \int_0^T\int_{\Omega_T} \p |\Delta \p+G|^2 + \int_0^T\int_{\Omega_T} \p \sum_{i,j} (\partial^2_{i,j}\p)^2 \leq C(T),
		\end{equation}
		\begin{equation}\label{grad p 4}
		\int_0^T\int_{\Omega_T} |\nabla \p|^4 \leq C(T),
		\end{equation}
		where $C$ depends on $T$ and previous bounds and is independent of $\gamma$.
	\end{thm}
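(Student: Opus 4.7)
The plan is to prove \eqref{t4} first by a weighted energy identity derived from the pressure equation \eqref{pressure}, and then to deduce \eqref{grad p 4} from \eqref{t4} via a single integration by parts combined with Young's inequality. All integrations by parts below are legitimate because $\p(t)$ is supported in the fixed compact $\Omega_T$ (formal manipulations being justified on a smooth approximation).

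For \eqref{t4}, I would multiply \eqref{pressure} by $-\Delta\p$ and integrate over $\R^d$. Writing $\Delta\p=w-G$ in the term $\gamma\!\int\p w\Delta\p$ and applying the weighted integration-by-parts identity
\[
3\int(\nabla^2\p)(\nabla\p,\nabla\p) \;=\; \int\p(\Delta\p)^2 - \int\p|\nabla^2\p|^2,
\]
which is derived by first moving one derivative out of $\partial_i\p\,\partial_j\p\,\partial_{ij}\p$ and then unfolding $\int\p\nabla\p\cdot\nabla\Delta\p$ via $\nabla\cdot(\p\nabla\p)=|\nabla\p|^2+\p\Delta\p$, one arrives at
\[
\tfrac12\tfrac{d}{dt}\!\int|\nabla\p|^2 + \bigl(\gamma-\tfrac23\bigr)\!\int\p w^2 + \tfrac23\!\int\p|\nabla^2\p|^2 \;=\; \bigl(\gamma-\tfrac43\bigr)\!\int\p Gw + \tfrac23\!\int\p G^2.
\]
The offending factor $\gamma$ in $\gamma\int\p Gw$ is then disposed of using the pressure equation itself, $\gamma\p w=\partial_t\p-|\nabla\p|^2$, which turns the troublesome term into $\bigl(1-\tfrac{4}{3\gamma}\bigr)\!\bigl(\int G\,\partial_t\p - \int G|\nabla\p|^2\bigr)$. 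Introducing a primitive $\Phi$ with $\partial_p\Phi=G$, writing $\int G\,\partial_t\p = \tfrac{d}{dt}\!\int\Phi - \int\Phi_c\,\partial_t\cgam$, and invoking the $L^1(Q_T)$ bound on $\partial_t\cgam$ from Proposition~\ref{prop1}, the term $\int_0^T\!\int G\,\partial_t\p$ is bounded; the remaining right-hand terms are controlled by the $L^\infty$ and $L^2(Q_T)$ bounds already established. Integrating in time and using $\|\nabla\p^0\|_{L^2}\le C$ from \eqref{2 deltap} yields \eqref{t4}, since $\gamma-\tfrac23 \ge \gamma-1$.

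For \eqref{grad p 4}, I would integrate by parts once in $\int|\nabla\p|^4 = \int\nabla\p\cdot(|\nabla\p|^2\nabla\p)$ to obtain
\[
\int|\nabla\p|^4 \;=\; -2\int\p\,(\nabla^2\p)(\nabla\p,\nabla\p) - \int\p|\nabla\p|^2\Delta\p,
\]
then apply Cauchy--Schwarz to each right-hand term (splitting $\Delta\p=w-G$ in the last one) using $\p\le p_H$, and absorb the resulting $(\int|\nabla\p|^4)^{1/2}$ factors into the left-hand side by Young's inequality. This yields a pointwise-in-time bound
\[
\int|\nabla\p|^4 \;\le\; C_1\!\int\p|\nabla^2\p|^2 + C_2\!\int\p w^2 + C_3,
\]
where $C_1,C_2$ depend only on $p_H$ and $C_3$ additionally on $\|G\|_\infty$ and $\|\nabla\p\|_{L^2(\R^d)}$. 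Time integration together with \eqref{t4} then gives \eqref{grad p 4} uniformly in $\gamma\ge\gamma_0>1$.

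The principal obstacle is the algebraic step of eliminating the $\gamma\int\p Gw$ contribution: without the self-representation $\gamma\p w = \partial_t\p - |\nabla\p|^2$ supplied by the pressure equation, this term would scale like $\gamma$ and wipe out the $(\gamma-1)$ coefficient on $\int\p w^2$. The weighted Bochner-type identity above is equally crucial, as it is precisely that integration by parts which produces the $+\tfrac23\!\int\p|\nabla^2\p|^2$ on the left-hand side after the expansion $(w-G)^2$; striking the right combination that keeps both $(\gamma-\tfrac23)\!\int\p w^2$ and $\tfrac23\!\int\p|\nabla^2\p|^2$ on the left (rather than partially cancelled) is the delicate point.
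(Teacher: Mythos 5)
Your proof is correct and follows essentially the same route as the paper: the weighted Bochner-type identity reducing $\int|\nabla \p|^2\Delta \p$ to $-\tfrac{2}{3}\int \p(\Delta \p)^2 + \tfrac{2}{3}\int \p\sum_{i,j}(\partial^2_{i,j}\p)^2$, the primitive of $G$ in $p$ to rewrite $\int G\,\partial_t \p$ as a total time derivative modulo terms controlled by Proposition~\ref{prop1}, and the deduction of the $L^4$ bound from~\eqref{t4} by one integration by parts together with Young's inequality. The only cosmetic difference is that the paper tests the pressure equation against $-(\Delta \p + G)$ rather than $-\Delta \p$, which produces the term $-\int \partial_t \p\, G$ directly and thereby sidesteps the extra substitution $\gamma \p(\Delta\p+G) = \partial_t \p - |\nabla \p|^2$ that you invoke to dispose of the cross term $\gamma\int \p\, G\,(\Delta\p+G)$.
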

	We recall that in the model independent of $c$, \cite{PQV}, the AB estimate is much stronger and gives $\Delta \p(t) + G(\p(t)) \geq -\frac{1}{\gamma t}$, and the major difficulty is the control of $\Delta \p$ which is provided by Theorem~\ref{AB}. As proved in \cite{MePeQu}, the $L^4$ estimate follows from the total energy control when $G=G(p)$, but this uses the strong form of  the AB estimate. Therefore, we have invented another proof, which is reminiscent of the energy control, but uses a different treatment of the 'dissipation' terms.
	%
	%
\paragraph*{Proof of Theorem~\ref{Thm1}.}
	For the sake of simplicity we forget the index $\gamma$ in this proof. We compute the time derivative of $w$ and obtain
	\begin{equation*}
	\partial_t w =\Delta(|\nabla p|^2)+\gamma \Delta(p w)+ \partial_p G (|\nabla p|^2+\gamma p w)+\partial_c G\, \partial_t c.
	\end{equation*}
	The first term is 
	\begin{equation*}
	\Delta (|\nabla p|^2)= 2\sum_{i,j} (\partial^2_{i,j}p)^2  + 2\nabla p \cdot\nabla(\Delta p) \geq \frac{2}{d}(\Delta p)^2 + 2\nabla p \cdot\nabla(\Delta p).
	\end{equation*}
	By definition of $w$ we have      
	\begin{align*}
	2\nabla p \cdot \nabla(\Delta p)=2\nabla p \cdot \nabla (w-G)=2\nabla p\cdot\nabla w -2\partial_p G |\nabla p|^2 -2 \partial_c G\nabla p \cdot \nabla c.
	\end{align*} 
	Hence, the time derivative satisfies
	\begin{align}\label{w_}
	\partial_t w\geq &\frac{2}{d}(w-G)^2+2\nabla p\cdot\nabla w -\partial_p G |\nabla p|^2 -2 \partial_c G\nabla p \cdot \nabla c\\\nonumber
	&+\gamma \Delta(p w)
	+\gamma p w \,\partial_p G +\partial_c G \,\partial_t c.
	\end{align}
	Multiplying \eqref{w_} by $-|w|_-$, we obtain
	\begin{align*}
	-\partial_t w\, |w|_- \leq &-\frac{2}{d}|w|_-^3-\frac{4}{d} G|w|^2_--\frac{2}{d}G^2|w|_- +  \nabla p\cdot \nabla |w|_-^2 +\partial_p G |\nabla p|^2|w|_- \\
	&+2 \partial_c G\nabla p \cdot \nabla c |w|_-
	+\gamma \Delta(p |w|_-)|w|_-+\gamma p\, \partial_p G  |w|_-^2 - \partial_c G\, \partial_t c |w|_-.
	\end{align*}
	Hence, using the fact that $\partial_p G< -\beta$ from \eqref{ReTe}, we integrate in space to obtain
	\begin{align*}
	\frac{d}{\dt}\int_{\Omega_T}\frac{|w|_-^2}{2}\leq &-\frac{2}{d}\int_{\Omega_T}|w|_-^3 -\frac{2}{d}\int_{\Omega_T}   G^2 |w|_- -\beta \int_{\Omega_T} |\nabla p|^2|w|_- \\&-\frac{4}{d} \int_{\Omega_T}   G |w|_-^2 
	+\underbrace{\int_{\Omega_T}\left[ \nabla p \cdot \nabla |w|_-^2 + \gamma \Delta(p |w|_-)|w|_- \right] }_{\A} 
	\\&\underbrace{-\int_{\Omega_T} \partial_c G\,\partial_t c |w|_- }_{\B} +\underbrace{2\int_{\Omega_T} \partial_c G\nabla p \cdot \nabla c |w|_-}_{\C},
	\end{align*}
	where $C$ is a positive constant depending on  $\|G\|_\infty$ and $d$. 
	Now we proceed integrating by parts each term.
	\begin{align*}
	\nonumber  \A&=-\int_{\Omega_T}\left[ \Delta p |w|_-^2  + \gamma  \nabla p \nabla |w|_- |w|_- + \gamma p |\nabla|w|_-|^2\right]
	\\
	\nonumber
	&= \int_{\Omega_T} |w|_-^3  +\int_{\Omega_T} G |w|_-^2+ \frac{\gamma}{2}\int_{\Omega_T} \Delta p |w|_-^2-\gamma \int_{\Omega_T}p |\nabla|w|_-|^2\\
	\label{rel1}
	&=\left(1-\frac{\gamma}{2}\right) \int_{\Omega_T}|w|_-^3 +\left(1-\frac{\gamma}{2}\right) \int_{\Omega_T}G|w|_-^2-\gamma\int_{\Omega_T}p|\nabla |w|_-|^2.
	\end{align*}            
	Next, using \eqref{2 c} and the Cauchy-Schwarz inequality, we obtain 
	\begin{equation*}
	\B\leq\, C \int_{\Omega_T}|w|_-^2+C.
	\end{equation*}        
Thanks to Young's inequality and \eqref{c4p2}, we compute 
	\begin{align*}
	\C&\leq \frac \beta 2 \int_{\Omega_T}|\nabla p|^2  |w|_- + C \int_{\Omega_T}| \nabla c |^4 + C \int_{\Omega_T}| w|_-^2 \\
	&\leq \frac \beta 2 \int_{\Omega_T}|\nabla p|^2  |w|_- + C \int_{\Omega_T}| w|_-^2 +C.
	\end{align*} 
	We may now come back to the control of $\frac{d}{dt}\int_{\Omega_T}\frac{|w|_-^2}{2}$.
	Gathering all the previous bounds, we get the following estimate
	\begin{align*}
	\frac{d}{\dt}\int_{\Omega_T}\frac{|w|_-^2}{2}&\leq  -\left(\frac{2}{d}-1+\frac{\gamma}{2}\right)\int_{\Omega_T}|w|_-^3 - \frac{\beta}{2} \int_{\Omega_T}|\nabla p|^2|w|_- +C(\gamma+1)\int_{\Omega_T} |w|_-^2 +C.
	\end{align*}
	Hence integrating in time we have
	\begin{align*}\label{time}
	\left(\frac{2}{d}-1+\frac{\gamma}{2}\right) \int_0^T\int_{\Omega_T}|w|_-^3
	&\leq   C\, (\gamma +1) \int_0^T\int_{\Omega_T} |w|_-^2 + \int_{\Omega_T} \frac{|w^0|_-^2}{2} +C(T)\\\nonumber
	&\leq  C\, (\gamma +1) \left(\int_0^T\int_{\Omega_T}|w|_-^3\right)^\frac{2}{3} +C(T),
	\end{align*}
	where we used the assumption \eqref{2 deltap} and C represents different constants depending on $T$, $\left|\Omega(T) \right|$ and previous bounds. This is the place where we strongly use the compact support assumption. 
	
	At last, with our assumption that $\gamma$ is large enough, we obtain
	\begin{align*}
	\int_0^T \int_{\Omega_T}|w|_-^3&\leq  C\left(\int_0^T\int_{\Omega_T}|w|_-^3\right)^\frac{2}{3}+C(T),
	\end{align*}   
	and hence we have proved our main result, that is the first estimate of \eqref{AB}, 
	\begin{equation*}
	\int_0^T\int_{\Omega_T}|w|_-^3\leq  C(T).
	\end{equation*}
	To prove the second estimate, we argue as follows. Since
	\begin{equation*}
	\int_0^T \int_{\Omega_T} (\Delta p + G)\leq C(T),
	\end{equation*}
	we can also control the positive part of $w$
	\begin{equation*}
	\int_0^T \int_{\Omega_T} |w|_+\leq C(T) +  \int_0^T \int_{\Omega_T} |w|_- \leq C(T)+ C \left( \int_0^T \int_{\Omega_T} |w|_-^3\right)^{\frac 13}.
	\end{equation*}
	Thus it holds
	\begin{equation*}
	\int_0^T \int_{\Omega_T} |\Delta p + G| \leq C(T).
	\end{equation*}
	Hence, we finally obtain the $L^1$ estimate for the Laplacian of the pressure
	\begin{equation*}
	\int_0^T \int_{\Omega_T} |\Delta p| \leq C(T),
	\end{equation*}
	that concludes the proof of Theorem~\ref{Thm1}.
	%
	%
\paragraph*{Proof of Theorem~\ref{L4}.}
	We consider the equation for the pressure \eqref{pressure}, we multiply it by $-(\Delta \p + G(\p,\cgam))$ and integrate in space. We find successively
	\begin{equation*}
	-\int_{\Omega_T} \partial_t \p \Delta \p - \int_{\Omega_T} \partial_t \p\, G = -\gamma \int_{\Omega_T} \p |\Delta \p + G|^2 - \int_{\Omega_T} |\nabla \p|^2 \Delta \p - \int_{\Omega_T} |\nabla \p|^2 G ,
	\end{equation*}
	\begin{equation*}
	\frac{d}{\dt} \int_{\Omega_T} \frac{|\nabla \p|^2}{2} -\int_{\Omega_T} \partial_t \p G+\gamma \int_{\Omega_T} \p |\Delta \p + G |^2 +\int_{\Omega_T} |\nabla \p|^2 \Delta \p \leq \|G\|_{L^\infty} \|\nabla \p (t)\|^2_{L^2}.
	\end{equation*}
	We integrate by parts the last term of the LHS and obtain
	\begin{align*}
	\int_{\Omega_T} |\nabla \p|^2 \Delta \p &=\int_{\Omega_T} \p \Delta(|\nabla \p|^2)\\
	& = 2 \int_{\Omega_T} \p \nabla \p \cdot \nabla (\Delta \p) +2 \int_{\Omega_T}\p \sum_{i,j} (\partial_{i,j}^2\p)^2\\
	&= -2 \int_{\Omega_T}\p |\Delta \p|^2 - 2\int_{\Omega_T} |\nabla \p|^2\Delta \p  +2 \int_{\Omega_T}\p \sum_{i,j} (\partial^2_{i,j}\p)^2.
	\end{align*}
	Hence, we conclude that
	\begin{equation*}
	\int_{\Omega_T} |\nabla \p|^2 \Delta \p= - \frac{2}{3} \int_{\Omega_T}\p |\Delta \p|^2 +\frac{2}{3} \int_{\Omega_T}\p \sum_{i,j} (\partial^2_{i,j}\p)^2.
	\end{equation*}
	Thus, we have
	\begin{equation}\label{diseq}
	\frac{d}{\dt} \int_{\Omega_T} \frac{|\nabla \p|^2}{2} \underbrace{-\int_{\Omega_T} \partial_t \p\, G}_{I_1}+\underbrace{\gamma \int_{\Omega_T} \p |\Delta \p + G|^2 -\frac{2}{3}\int_{\Omega_T} \p |\Delta \p|^2}_{I_2}  +\frac{2}{3} \int_{\Omega_T}\p \sum_{i,j} (\partial^2_{i,j}\p)^2 \leq C(T).
	\end{equation}
	We can define the function $\overline{G}=\overline{G}(\p,\cgam)=\int_0^{\p} G(q,\cgam) dq$ and then
	\begin{equation*}
	\partial_t \p \,G(\p,\cgam)=\partial_t \overline{G}(\p,\cgam)-\partial_t \cgam\, \partial_c \overline{G}(\p,\cgam).
	\end{equation*}
	Using this relation the term $I_1$ can be written as
	\begin{equation*}
	I_1=-\int_{\Omega_T}\partial_t \overline{G}+\int_{\Omega_T}\partial_c\overline{G}\, \partial_t \cgam \geq -\int_{\Omega_T}\partial_t \overline{G}-  C
	\end{equation*}
	thanks to the $L^2$ bound on $\partial_t \cgam$ in~\eqref{2 c} and because  $|\partial_c\overline{G} | \leq C \p$.
	We can estimate the term $I_2$ from below as follows
	\begin{equation*}
	I_2\geq\left(\gamma - 1 \right)\int_{\Omega_T}\p |\Delta \p+G|^2- C \int_{\Omega_T}\p |G|^2.
	\end{equation*}
	Therefore
	\begin{equation}\label{I1I2}
	I_1+I_2\geq \left(\gamma - 1  \right)\int_{\Omega_T} \p |\Delta \p +G|^2  -\int_{\Omega_T}\partial_t \overline{G} -C(T).
	\end{equation}
	Combining \eqref{diseq} and \eqref{I1I2}, we obtain
	\begin{equation*}
	\frac{d}{dt}\int_{\Omega_T} \left[ \frac{|\nabla \p|^2}{2} - \overline{G} \right] +(\gamma -1) \int_{\Omega_T}\p|\Delta \p+G|^2 + \frac{2}{3}\int_{\Omega_T} \p \sum_{i,j} (\partial^2_{i,j}\p)^2 \leq C(T).
	\end{equation*}
	Finally, integrating in time, we obtain the estimate~\eqref{t4} and this proves the first step of Theorem~\ref{L4}. 
	
	Furthermore, this bound also implies
	\begin{equation}\label{pDelta}
	(\gamma -1) \int_0^T\int_{\Omega_T} \p |\Delta \p|^2 \leq C(T) .
	\end{equation}
	Now, we compute the $L^4$ norm of the gradient of $\p$, as we did for the gradient of $\cgam$.
	\begin{align*}
	\int_{\Omega_T} |\nabla \p|^4 = - \int_{\Omega_T} \p \Delta \p |\nabla \p|^2 - \int_{\Omega_T} \p \nabla \p\cdot \nabla(|\nabla \p|^2).
	\end{align*}
	Applying Young's inequality to the first term, we obtain
	\begin{align*}
	\frac{1}{2} \int_{\Omega_T} |\nabla \p|^4 \leq \frac{1}{2} \int_{\Omega_T} \p^2 |\Delta \p|^2 - 2 \sum_{i,j} \int_{\Omega_T}\p\, \partial_i \p\, \partial_j \p \, \partial^2_{i,j} \p .
	\end{align*}
	The last term can be upper bounded by 
	\begin{align*}
	 \frac{1}{4} \int_{\Omega_T} |\nabla \p|^4 + 4\int_{\Omega_T} \p^2 \sum_{i,j}(\partial^2_{i,j}\p)^2.  
	\end{align*}
	Therefore,  we obtain
	\begin{equation*}
	\frac{1}{4} \int_{\Omega_T} |\nabla \p|^4 \leq \frac{1}{2} \int_{\Omega_T} \p^2 |\Delta \p|^2  + 4\int_{\Omega_T} \p^2 \sum_{i,j}(\partial^2_{i,j}\p)^2.
	\end{equation*}
	Since $p\leq p_H$, by~\eqref{t4} and \eqref{pDelta} we conclude
	\begin{align*}
	\int_0^T\int_{\Omega_T} |\nabla \p|^4 \leq& C(T),
	\end{align*}
	and this completes the proof of Theorem~\ref{L4}.

\section{Complementarity relation}
	\label{Sec:4}
	
	Thanks to the bounds provided by Theorem~\ref{Thm1} and Theorem~\ref{L4}, we may obtain the desired compactness on the pressure gradient. This allows us to pass to the incompressible limit and prove the complementarity relation as we state it now.
	\begin{thm}[Complementarity relation]\label{CR}
		With the assumptions of Theorem~\ref{Thm1}, the complementarity condition~\eqref{crd} holds. More precisely, for all test functions $\zeta \in \D(Q)$, the limit pressure $p_\infty$ satisfies 
		\begin{equation}\label{compl}
		\iint_{Q}  \left( -|\nabla p_\infty|^2\zeta - p_\infty\nabla p_\infty\cdot\nabla\zeta + p_\infty G(p_\infty,c_\infty)\zeta\right)=0.
		\end{equation}
	\end{thm}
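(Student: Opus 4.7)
My plan is to pass to the incompressible limit $\gamma\to\infty$ in the pressure equation~\eqref{pressure}. Testing~\eqref{pressure} against $\zeta\in\D(Q)$ and dividing by $\gamma$ yields
\[
\frac{1}{\gamma}\iint_{Q} \zeta\, \partial_t p_\gamma = \iint_{Q} p_\gamma\bigl(\Delta p_\gamma + G(p_\gamma, c_\gamma)\bigr)\zeta + \frac{1}{\gamma}\iint_{Q} |\nabla p_\gamma|^2 \zeta.
\]
The uniform $L^1(Q_T)$ bound on $\partial_t p_\gamma$ and the uniform $L^2(Q_T)$ bound on $\nabla p_\gamma$ from Proposition~\ref{prop1} kill both $1/\gamma$-weighted terms, so the middle integral must tend to zero. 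An integration by parts rewrites it as
\[
\iint_{Q} p_\gamma(\Delta p_\gamma + G(p_\gamma,c_\gamma))\zeta = -\iint_{Q} |\nabla p_\gamma|^2 \zeta - \iint_{Q} p_\gamma \nabla p_\gamma \cdot \nabla \zeta + \iint_{Q} p_\gamma G(p_\gamma,c_\gamma)\, \zeta,
\]
so establishing~\eqref{compl} is reduced to identifying the limit of each of the three integrals on the right.

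The last two integrals are routine. Strong $L^q_{loc}(Q_T)$ convergence of $p_\gamma$ and $c_\gamma$ --- a standard consequence of Aubin--Lions applied with the $L^1$ bounds on their time derivatives --- combined with the continuity of $G$, gives $\iint p_\gamma G(p_\gamma,c_\gamma)\zeta \to \iint p_\infty G(p_\infty,c_\infty)\zeta$. For the cross term, I write $p_\gamma \nabla p_\gamma = \tfrac{1}{2}\nabla p_\gamma^2$ and integrate by parts once more, obtaining $\iint p_\gamma \nabla p_\gamma \cdot \nabla \zeta = -\tfrac{1}{2}\iint p_\gamma^2\, \Delta \zeta$, which converges to $-\tfrac12 \iint p_\infty^2 \Delta \zeta = \iint p_\infty \nabla p_\infty \cdot \nabla \zeta$ by strong $L^1_{loc}$ convergence of $p_\gamma^2$.

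The main obstacle is the limit $\iint |\nabla p_\gamma|^2 \zeta \to \iint |\nabla p_\infty|^2 \zeta$, for which one needs strong $L^2_{loc}(Q_T)$ compactness of $\nabla p_\gamma$. This is precisely where the two new estimates of Theorems~\ref{Thm1} and~\ref{L4} come together. The uniform $L^4(Q_T)$ bound of Theorem~\ref{L4} provides equi-integrability of $|\nabla p_\gamma|^2$ and rules out any concentration phenomenon. Meanwhile, the uniform $L^1(Q_T)$ bound on $\Delta p_\gamma$ from Theorem~\ref{Thm1}, together with the $L^\infty$ bound and compact support of $p_\gamma$ in the fixed bounded set $\Omega_T$ and the $L^1(Q_T)$ bound on $\partial_t p_\gamma$, yields via an Aubin--Lions-type compactness argument (exploiting the compact Sobolev embedding on the bounded domain $\Omega_T$) strong $L^1(Q_T)$ convergence of $\nabla p_\gamma$. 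Interpolating this strong $L^1$ convergence with the uniform $L^4$ bound upgrades the convergence to strong $L^r(Q_T)$ for every $r<4$, and in particular to strong $L^2$. This delivers $\iint |\nabla p_\gamma|^2 \zeta \to \iint |\nabla p_\infty|^2 \zeta$ and completes the proof of~\eqref{compl}.
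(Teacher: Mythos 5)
Your proof is correct and follows essentially the same route as the paper: multiply the pressure equation~\eqref{pressure} by a test function, isolate the $1/\gamma$-weighted terms (which vanish by Proposition~\ref{prop1}), and pass to the limit in the remaining identity using strong $L^2(Q_T)$ compactness of $\nabla p_\gamma$, which is exactly what Theorems~\ref{Thm1} and~\ref{L4} are designed to supply via an Aubin--Lions argument and interpolation against the $L^4$ bound. The one technical variation worth noting is in the Aubin--Lions step: the paper applies the lemma directly to $\nabla p_\gamma$, reading off from~\eqref{pressure} that $\partial_t(\nabla p_\gamma)=\nabla\bigl[\gamma p_\gamma(\Delta p_\gamma+G)+|\nabla p_\gamma|^2\bigr]$ is a spatial derivative of a function uniformly bounded in $L^1(Q_T)$ (the $L^1$ bound on $\gamma p_\gamma(\Delta p_\gamma+G)$ being extracted from the $L^1$ bounds on $\partial_t p_\gamma$ and $|\nabla p_\gamma|^2$), while you apply it to $p_\gamma$ itself using the $L^1$ bound on $\partial_t p_\gamma$ and then read off convergence of $\nabla p_\gamma$ from the compact Sobolev embedding; both are valid and lead to the same conclusion $\nabla p_\gamma\to\nabla p_\infty$ strongly in $L^q(Q_T)$ for $q<4$.
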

	This result is related to the geometric form of the Hele-Shaw free boundary problem (while~\eqref{syslim} is the weak form). It tells us that the limit solution satisfies 
	\begin{equation*}
	\begin{cases}
	-\Delta p_\infty =  G(p_\infty, c_\infty) &\text{ in } {\cal O}(t) :=\{x;\, p_{\infty}(x,t) >0\} ,\\
	p_\infty=0 &\text{ on } \partial {\cal O}(t),
	\end{cases}
	\end{equation*}
	where, for every $t>0$, the set ${\cal O}(t)$ represents the region occupied by the tumor. Moreover, in the limit, the pressure and the cell population density satisfy the relation
	\begin{equation}
	\label{relpn}
	p_\infty(1-n_\infty)=0.
	\end{equation}
	In fact, we may expect that the set ${\cal O}(t)$ coincides a.e. with the set where $n_\infty=1$, hence the classification of \textit{incompressible} model. See \cite{MePeQu} for the proof in the case without nutrient. It is not obvious to extend the result in the case at hand.
	%
	%
	%
\paragraph*{Proof of Theorem~\ref{CR}.}
	Thanks to the bounds in~\eqref{2 c}, \eqref{timeder} and \eqref{c4p2}, $\p$ and $\cgam$ are locally compact and thus, after the extraction of sub-sequences,
	\begin{align*}
	\p \rightarrow p_\infty \text{ strongly in } L^1(Q_T),\qquad 
	\cgam \rightarrow c_\infty \text{ strongly in } L^1(Q_T),
	\end{align*}
	when $\gamma \rightarrow \infty$, for all $T>0$.
	From Theorem~\ref{L4}, we also recover the weak convergence of the gradient of the pressure, up to a sub-sequence,
	\begin{equation*}
	\nabla \p \rightharpoonup \nabla p_\infty  \text{ weakly in } L^4(Q_T).
	\end{equation*}
	From Theorem~\ref{Thm1}, we know that $\Delta \p$ is bounded in $L^1$. Then, we have local compactness in space for the pressure gradient. To gain compactness in time we use the Aubin-Lions lemma. From the equation for the pressure \eqref{pressure}, we have
	\begin{equation*}
	\partial_t (\nabla\p)=\nabla[\gamma \p(\Delta  \p +G) + |\nabla \p|^2],
	\end{equation*}
	where the RHS is a sum of space derivatives of functions bounded in $L^1$. In fact, since by \eqref{timeder} and \eqref{c4p2}, $\partial_t \p$ and $|\nabla \p|^2$ are in $L^1$, from \eqref{pressure} the term $\gamma \p (\Delta\p +G)$ is also bounded in $L^1$.
	Thus, we can extract a sub-sequence such that
	\begin{equation*}
	\nabla \p \rightarrow \nabla p_\infty \text{ strongly in } L^q(Q_T), \text{ for } 1 \leq q < \frac{d}{d-1}.
	\end{equation*}
	After extraction of a sub-sequence we obtain convergence almost everywhere for $\nabla \p$. Then, using the $L^4$ bound of Theorem~\ref{L4}, we have
	\begin{equation*}
	\nabla \p \rightarrow \nabla p_\infty \text{ strongly in } L^q(Q_T), \text{ for } 1 \leq q < 4,
	\end{equation*}
	hence, in particular, also for $q=2$.
	
	Let $\zeta \in \D(Q)$ be a test function. We consider the equation for $\p$
	\begin{equation*}
	\partial_t \p =\gamma \p (\Delta \p + G(\p, \cgam)) + |\nabla \p|^2,
	\end{equation*}
	we multiply it by $\zeta$ and we integrate in $Q$
	\begin{equation}\label{relation}
	- \frac{1}{\gamma} \iint_{Q} \left( \p  \partial_t\zeta+|\nabla \p|^2\zeta \right)=\iint_{Q}  \left( -|\nabla \p|^2\zeta - \p\nabla \p\cdot\nabla\zeta + \p G(\p,\cgam)\zeta\right)
	\end{equation}
	Hence, passing to the limit for $\gamma \rightarrow \infty$ we obtain the complementarity relation
	\begin{equation}
	\iint_{Q}  \left( -|\nabla p_\infty|^2\zeta - p_\infty\nabla p_\infty\cdot\nabla\zeta + p_\infty G(p_\infty,c_\infty)\zeta\right)=0.
	\end{equation}
	This is equivalent to
	\begin{equation*}
	\iint_{Q}  p_\infty \left( \Delta p_\infty + G(p_\infty,c_\infty)\right) \zeta=0,
	\end{equation*}
	which means
	\begin{equation*}
	p_\infty \left( \Delta p_\infty + G(p_\infty,c_\infty)\right) =0, \;\text{ in } \D'(Q),
	\end{equation*}
	and the proof of Theorem~\ref{CR} is complete.
\section*{Acknowledgments}

\includegraphics[width=0.7cm]{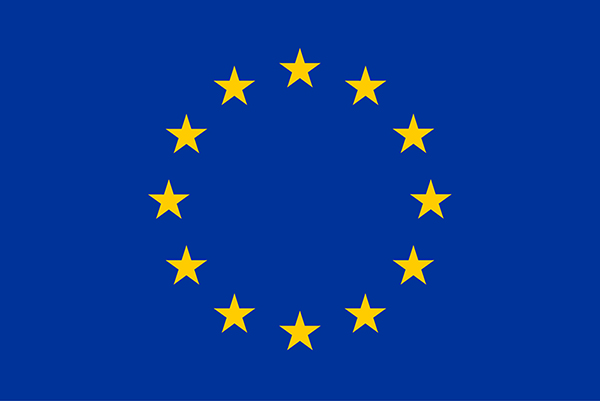} This project has received funding from the European Union’s Horizon 2020 research and innovation program under the Marie Skłodowska-Curie (grant agreement No 754362).\\
B.P. has received funding from the European Research Council (ERC) under the European Union's Horizon 2020 research and innovation program (grant agreement No 740623).

	%
	%
\begin{appendices}
	%
	%
\section{Compact support property}
		\label{appA}
		We now give the proof of the finite speed of propagation property of the solutions of system \eqref{sys}. Our goal is to show that if the initial data satisfy
		$$\text{supp}(n_\gamma^0)\subset \Omega_0, \quad \forall \gamma >1,$$
		with $\Omega_0$ independent of $\gamma$, then the solutions $\n(t)$, $\p(t)$ are compactly supported, uniformly in $\gamma$ and $t\in [0,T]$, for all $T>0$. This means that there exists a bounded open domain $\Omega_T$ independent of $\gamma$ such that
		$$\text{supp}(n_\gamma(t))\subset \Omega_T, \quad \forall\gamma>1,\, \forall t\in [0,T].$$
		For every $\gamma>1$, the pressure $\p$ is a sub-solution to the equation
		\begin{equation}
		\partial_t \p \leq |\nabla \p|^2+ \gamma \p(\Delta \p + G(0,c_B)),	
		\end{equation}
		therefore, finding a super-solution with compact support we can control the supports of $\p$ and $\n$.
		
		We consider the function
		$$\Pi (x,t) = G(0,c_B)\left|S(t)-\frac{|x|^2}{2}\right|_+,$$
		where we choose the function $S$ such that it satisfies $$S'(t)\geq 2 G(0,c_B) S(t).$$
		We compute the derivatives of $\Pi$ and we find
		\begin{align*}
		&\partial_t \Pi (x,t)=G(0,c_B)S'(t)\mathds{1}_{\{S(t)\geq \frac{|x|^2}{2}\}   }   ,\\
		&\nabla \Pi(x,t)=-G(0,c_B) x \mathds{1}_{\{S(t)\geq \frac{|x|^2}{2}\}   }  ,\quad
		\Delta \Pi (x,t)\leq-d\, G(0,c_B)\mathds{1}_{\{S(t)\geq \frac{|x|^2}{2}\} }   .
		\end{align*}
		Therefore $\Pi$ satisfies
		\begin{align*}
		\partial_t \Pi -|\nabla \Pi|^2-\gamma \Pi(\Delta \Pi +G(0,c_B))&\geq
		(  G(0,c_B)S'(t)  -G(0,c_B)^2 x^2)\mathds{1}_{\{S(t)\geq \frac{|x|^2}{2}\}}+\gamma \Pi G(0,c_B)(d+1)\\
		&\geq ( 2 G(0,c_B)^2 S(t)  -G(0,c_B)^2 x^2)\mathds{1}_{\{S(t)\geq \frac{|x|^2}{2}\} }\\
		&\geq 0.
		\end{align*}
		Hence, we have proved that for all $T>0$ 
		$$\text{supp}(\p(t))\subset \text{supp}(\Pi(t))\subset B_{T},\;\forall \gamma >1, \forall t\in[0,T],$$
		where $B_{T}$ is the open ball with radius $\sqrt{2 S(T)}$.
		%
		%
\section{Removing the compact support assumption}
		\label{app}
		The proof of the main result of the paper is built on the compact support assumption stated in Section~\ref{sec:NAMR}. Our goal is to generalize the result removing this condition. Let us note that it is sufficient to extend the Theorem~\ref{Thm1}, since it is the only one for which we used the compact support assumption. Moreover, let us notice that Proposition~\ref{prop1} holds true in this framework.
		We define the functional $w$ as in \eqref{w} and we state the following result.
		\begin{prop}[Aronson-B\'enilan generalized estimate in $L^3$]
			Let $\Phi$ be a test function in $C_{\text{comp}}^2(\R^d)$. With the assumptions from \eqref{ReTe} to \eqref{2 gradc}, and with $\gamma > \max(1,2-\frac 4 d)$, for all $T>0$ there exists a constant $C(T)$ depending on the previous bounds and independent of $\gamma$ such that
			\begin{equation}
			\int_0^T\int_{\R^d}|w|_-^3\Phi\leq C(T), \quad  \int_0^T \int_{\R^d} |\Delta \p| \Phi \leq C(T).
			\end{equation}
		\end{prop}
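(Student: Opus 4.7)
The plan is to revisit the proof of Theorem~\ref{Thm1}, carrying the weight $\Phi\in C^2_{\text{comp}}(\R^d)$ through every step. The estimates of Proposition~\ref{prop1} remain at our disposal because they are already global in $\R^d$. The compact support hypothesis entered the previous proof in exactly one place, namely the H\"older interpolation from $\int|w|_-^2$ to $\int|w|_-^3$ via the finite measure $|\Omega_T|$; this is now replaced by
\[
\int_{\R^d}|w|_-^2\,\Phi\;\le\;\Big(\int_{\R^d}|w|_-^3\,\Phi\Big)^{2/3}\Big(\int_{\R^d}\Phi\Big)^{1/3}.
\]

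First, I would redo the pointwise derivation of the inequality for $-\partial_t w\,|w|_-$ exactly as in Theorem~\ref{Thm1} and then multiply by $\Phi$ before integrating in space. The weighted analogues of $\B$ and $\C$ carry through with no new difficulty, absorbing into constants depending on $\|\Phi\|_\infty$, $\int\Phi$ and the bounds of Proposition~\ref{prop1}. The delicate piece is the weighted version of $\A$: the integrations by parts of $\int\nabla p\cdot\nabla|w|_-^2\,\Phi$ and of $\gamma\int\Delta(p|w|_-)\,|w|_-\,\Phi$ still produce the favorable dissipation $-\gamma\int p\,|\nabla|w|_-|^2\,\Phi$ together with the terms $(1-\tfrac{\gamma}{2})\int|w|_-^3\Phi$ and $(1-\tfrac{\gamma}{2})\int G\,|w|_-^2\,\Phi$ of Theorem~\ref{Thm1}, but they now also generate additional cross terms of the schematic form $\gamma\int p\,|w|_-\,\nabla|w|_-\cdot\nabla\Phi$ and $\gamma\int|w|_-^2\,\nabla p\cdot\nabla\Phi$, carrying the factor $\gamma$.

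The core work is the absorption of these residuals. I would reduce to $\Phi=\phi^2$ with $\phi\in C^2_{\text{comp}}$, to which any non-negative test function is pointwise dominated by a finite sum of such squared cut-offs; this choice gives the structural bound $|\nabla\Phi|^2\le C_\Phi\,\Phi$ needed for the weighted analysis. Cauchy--Schwarz then absorbs the first cross term into $\tfrac{\gamma}{2}\int p\,|\nabla|w|_-|^2\,\Phi$ at the cost of a lower-order $\gamma\int p\,|w|_-^2\,\Phi$ contribution; Young's inequality splits the second into $|w|_-^3\Phi$ (absorbed into the main $\gamma\int|w|_-^3\Phi$ coefficient once $\gamma$ is large) and $|w|_-\,|\nabla p|^2\,\Phi$ (absorbed into the existing $\tfrac{\beta}{2}\int|\nabla p|^2\,|w|_-\,\Phi$ dissipation). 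After the bookkeeping, one arrives at
\[
\frac{d}{dt}\int_{\R^d}\frac{|w|_-^2}{2}\Phi\;\le\;-c\,\gamma\!\int_{\R^d}|w|_-^3\,\Phi\;+\;C(\gamma+1)\!\int_{\R^d}|w|_-^2\,\Phi\;+\;C_\Phi(T),
\]
with $C_\Phi(T)$ independent of $\gamma$. Integration in time, the H\"older interpolation displayed above, and Young's inequality then give $\int_0^T\!\int_{\R^d}|w|_-^3\,\Phi\le C(T)$ exactly as in Theorem~\ref{Thm1}, and the $L^1$ bound on $|\Delta p|\,\Phi$ follows from controlling $|w|_+\Phi$ via the $L^1$ bound on $w$ and the pointwise estimate $|\Delta p|\,\Phi\le|w|\Phi+\|G\|_\infty\Phi$. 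The main obstacle is the bookkeeping of the $\gamma$-weighted $\nabla\Phi$ residuals; what unlocks their absorption, compared to the compact support case, is the favorable dissipation $\gamma\int p\,|\nabla|w|_-|^2\,\Phi$ that was available but unused in Theorem~\ref{Thm1}, combined with the structural bound $|\nabla\Phi|^2\le C_\Phi\,\Phi$.
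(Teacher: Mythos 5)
The plan follows the paper's structure (reproduce the differential inequality for $|w|_-^2$ with the weight $\Phi$, then control the weighted residual terms), but the choice $\Phi=\phi^2$ with the structural bound $|\nabla\Phi|^2\le C_\Phi\,\Phi$ is not strong enough to close the argument, and this is precisely where the paper does something different.

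Work out the absorption of the residuals explicitly. After the second integration by parts, the piece to control is (schematically)
\[
2\int p\,|w|_-\,\nabla|w|_-\cdot\nabla\Phi
\;\le\;
\tfrac12\int p\,|\nabla|w|_-|^2\,\Phi
\;+\;
2\int p\,|w|_-^2\,\frac{|\nabla\Phi|^2}{\Phi},
\]
and likewise a term $\gamma\int p\,|w|_-^2\,\Delta\Phi$. If $|\nabla\Phi|\le C\Phi$ and $|\Delta\Phi|\le C\Phi$, then $\frac{|\nabla\Phi|^2}{\Phi}\le C^2\Phi$ and the residuals are bounded by $C(\gamma+1)\int|w|_-^2\,\Phi$, i.e. they \emph{retain the weight} $\Phi$, which is exactly what the H\"older interpolation against $\int|w|_-^3\Phi$ needs. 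With your choice $\Phi=\phi^2$ one only has $\frac{|\nabla\Phi|^2}{\Phi}=4|\nabla\phi|^2\le C$, which is bounded but carries \emph{no factor of $\Phi$}; the residual becomes $\int_{\mathrm{supp}\,\phi}p\,|w|_-^2$, an unweighted quantity which cannot be interpolated against $\int|w|_-^3\phi^2$ (the weight vanishes on $\partial\,\mathrm{supp}\,\phi$, so the domain of the residual does not match). The same failure is worse for the $\gamma$-weighted $\Delta\Phi$ term: $\Delta(\phi^2)=2|\nabla\phi|^2+2\phi\Delta\phi$ does not satisfy $|\Delta\Phi|\le C\Phi$, since near $\partial\,\mathrm{supp}\,\phi$ the quantity $|\nabla\phi|^2$ stays of order one while $\phi^2\to 0$. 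Note also that no compactly supported non-trivial $\Phi$ can satisfy $|\nabla\Phi|\le C\Phi$: a Gronwall-type argument along a ray shows $\Phi$ could then never reach zero.

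The missing idea is therefore not a structural bound of the form $|\nabla\Phi|^2\le C\Phi$, but a weight that satisfies $|\nabla\Phi|\le C\Phi$ and $|\Delta\Phi|\le C\Phi$ pointwise — the paper's proof uses an exponentially decaying, strictly positive $\Phi$ (e.g. $e^{-|x|}$ smoothed), for which both bounds hold and which dominates any compactly supported $C^2$ test function, so the stated conclusion follows a posteriori. A secondary but related issue is your schematic claim that the $\nabla p\cdot\nabla\Phi$ residual carries a factor $\gamma$; the paper's explicit bookkeeping shows the $\gamma$-weighted contributions cancel in that slot, leaving $\mathcal{A}_1=\tfrac{\gamma}{2}\int p|w|_-^2\Delta\Phi - \int|w|_-^2\nabla p\cdot\nabla\Phi$ with no $\gamma$ on the gradient term. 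If the $\gamma$ really were present there, absorption into the $\tfrac\beta2\int|\nabla p|^2|w|_-\Phi$ dissipation (coefficient independent of $\gamma$) would fail for $\gamma$ large, as you implicitly rely on. You should perform the integration by parts in full rather than argue schematically, both to confirm the correct powers of $\gamma$ and because that is where the $\nabla\Phi$ and $\Delta\Phi$ terms are generated.
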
 
		\begin{proof}
			Computing the time derivative of the negative part of $w$, we have
			\begin{align*}
			-\partial_t \left(\frac{|w|_-^2}{2}\right) \leq    
			&-\frac{4}{d}|w|_-^3-\frac{2}{d} G|w|^2_--\frac{2}{d}G^2|w|_-+\nabla |w|_-^2 \cdot \nabla p+\partial_p G |\nabla p|^2|w|_-\\ &+2 \partial_c G\nabla p \cdot \nabla c |w|_-
			+\gamma \Delta(p |w|_-)|w|_- - \partial_c G\, \partial_t c |w|_-.
			\end{align*}
			as in the proof of Theorem~\ref{Thm1}.
			We multiply the inequality by $\Phi$ and integrate in space
			\begin{align}\label{phi} 
			\frac{d}{\dt}\int_{\Omega_T}\frac{|w|_-^2}{2}\Phi\leq &-\frac{2}{d}\int_{\Omega_T}|w|_-^3\Phi -\frac{2}{d}\int_{\Omega_T}   G^2 |w|_-\Phi-\beta\int_{\Omega_T} |\nabla p|^2|w|_- \Phi
			\\\nonumber & -\frac{4}{d} \int_{\Omega_T}   G |w|_-^2 \Phi +\underbrace{\int_{\Omega_T}\left[\nabla p\cdot \nabla \left(|w|_-^2\right) \Phi + \gamma \Delta(p |w|_-)|w|_-\Phi \right] }_{\A} \\
			\nonumber&\underbrace{-\int_{\Omega_T} \partial_c G\,\partial_t c |w|_- \Phi}_{\B} +\underbrace{2\int_{\Omega_T} \partial_c G\nabla p \cdot \nabla c |w|_-\Phi}_{\C}.
			\end{align}
			Now we proceed computing each term.
			\begin{align*}
			\A=&\int_{\R^d}\nabla p \cdot\nabla \left(|w|_-^2\right) \Phi -\gamma \int_{\R^d} \nabla (p |w|_-)\cdot\nabla |w|_- \Phi -\gamma \int_{\R^d}|w|_- \nabla (p |w|_-)\cdot\nabla \Phi \\
			= &-\int_{\R^d}\Delta p |w|_-^2 \Phi - \int_{\R^d}|w|_-^2 \nabla p \nabla\Phi -\gamma \int_{\R^d}|w|_-\nabla p\nabla|w|_-\Phi \\
			&-\gamma \int_{\R^d}p |\nabla |w|_-|^2\Phi+\gamma \int_{\R^d}p|w|_-^2\Delta\Phi+\gamma \int_{\R^d} p \nabla \left(\frac{|w|_-^2}{2}\right)\cdot\nabla \Phi\\
			= &-\int_{\R^d}\Delta p |w|_-^2 \Phi - \int_{\R^d}|w|_-^2 \nabla p \cdot\nabla\Phi +\frac\gamma 2 \int_{\R^d}\Delta p|w|_-^2\Phi +\frac\gamma 2 \int_{\R^d}|w|_-^2\nabla p\cdot\nabla\Phi \\
			&-\gamma \int_{\R^d}p |\nabla |w|_-|^2\Phi +\frac{\gamma}{2} \int_{\R^d} p |w|_-^2 \Delta \Phi-\frac \gamma 2 \int_{\R^d}|w|_-^2 \nabla p \cdot\nabla \Phi\\
			=&\left(1-\frac\gamma 2\right)\int_{\R^d} |w|_-^3\Phi  +\left(1- \frac \gamma 2\right)\int_{\R^d} G |w|_-^2\Phi -\gamma \int_{\R^d} p |\nabla|w|_-|^2\Phi+\A_1,
			\end{align*}
			with $$\A_1=\frac \gamma 2 \int_{\R^d}p|w|_-^2\Delta\Phi -\int_{\R^d}|w|_-^2\nabla p\cdot\nabla\Phi.$$
			By the Cauchy-Schwarz inequality we have
			\begin{align*}
			\B&\leq\,  \int_{\R^d}|w|_-^2\Phi+C\int_{\R^d}|\partial_t c|^2\Phi\leq \int_{\R^d}|w|_-^2\Phi+C.
			\end{align*}
			Using Young's inequality and \eqref{c4p2}, we find 
			\begin{align*}  
			\C&\leq \frac{\beta}{2}\int_{\R^d} | \nabla p|^2|w|_- \Phi+ C\int_{\R^d} |\nabla c|^2|w|_-\Phi\\
			&\leq  \frac{\beta}{2}\int_{\R^d} |\nabla p|^2|w|_- \Phi+ C\int_{\R^d} |\nabla c|^4\Phi+ C\int_{\R^d}|w|_-^2\Phi \\
			&\leq  \frac{\beta}{2}\int_{\R^d} |\nabla p|^2|w|_- \Phi+ C\int_{\R^d}|w|_-^2\Phi +C.
			\end{align*}
			It remains to treat the term containing the derivatives of $\Phi$
			\begin{equation*}
			\A_1= -\int_{\R^d}|w|_-^2\nabla p\cdot \nabla\Phi+\frac \gamma 2 \int_{\R^d}p|w|_-^2\Delta\Phi.
			\end{equation*}
			We choose a positive function $\Phi$ with exponential decay, such that $|\nabla\Phi|\leq C\Phi$ and  $|\Delta\Phi|\leq C\Phi$. Now, we integrate by parts and use Young's inequality 
			\begin{align*} 
			\A_1=&\,2\int_{\R^d}p|w|_-\nabla|w|_- \cdot\nabla\Phi +\left(1+\frac{\gamma}{2}\right)\int_{\R^d}p|w|_-^2  \Delta \Phi \\
			\leq & \,\frac{1}{2} \int_{\R^d}p|\nabla|w|_-|^2\Phi +  C (\gamma+1) \int_{\R^d}|w|_-^2\Phi.
			\end{align*} 
			Finally, the inequality \eqref{phi} can be written as follows
			\begin{equation*}
			\frac{d}{\dt}\int_{\R^d}|w|_-^2\Phi+ \left(\frac 2 d+\frac \gamma 2 -1\right)\int_{\R^d}|w|_-^3\Phi+\frac \beta 2\int_{\R^d}|\nabla p|^2 |w|_-\Phi\leq C(\gamma+1)\int_{\R^d} |w|_-^2\Phi  +C,
			\end{equation*}
			then, for $ \gamma>2-\frac 4 d$, integrating in time we have
			\begin{equation*}
			\int_0^T\int_{\R^d}|w|_-^3\Phi\leq \left(\int_0^T\int_{\R^d}|w|_-^3\Phi\right)^\frac{2}{3}+C(T),
			\end{equation*}
			and then we have proved
			\begin{equation*}
			\int_0^T\int_{\R^d}|w|_-^3\Phi\leq  C(T) .
			\end{equation*}
			By consequence
			\begin{equation*}
			\int_0^T\int_{\R^d}|w|_-^2\Phi\leq  C(T),\quad\int_0^T \int_{\R^d}|w|_-\Phi\leq  C(T). 
			\end{equation*}
			Since $\Phi$ is a smooth function with compact support
			\begin{equation*}
			\int_0^T \int_{\R^d} (\Delta p + G)\Phi\leq C,
			\end{equation*}
			and then also 
			$$\int_{\R^d}\Phi|\Delta p + G|_+=\int_{\R^d}\Phi(\Delta p + G)+\int_{\R^d}\Phi|\Delta p + G|_-\leq C(T).$$
			Therefore we recover the local $L^1$ estimate for the Laplacian of the pressure
			\begin{equation*}
			\int_0^T \int_{\R^d} |\Delta p|\Phi \leq C.
			\end{equation*}
		\end{proof}
%
%
\section{Optimality of  the bound $\nabla p \in L^4$}

In Theorem \ref{L4}, we have established the uniform bound $\nabla p \in L^4$, see \eqref{grad p 4}. Here, we aim to show that the exponent $4$ cannot be increased. We use the  so-called \textit{focusing solution} from~\cite{AG} that we adapt to the limit $\gamma \to \infty$, i.e., the Hele-Shaw problem. We recall that, for the porous medium equation,  the focusing solution consists in a spherical hole filling which generates a  stronger singularity than the Barenblatt solution, see \cite{V}. 

Consider $\alpha>0$ such that $\nabla p \in L^\alpha(Q_T)$, where $p$ is a solution of the Hele-Shaw problem with Dirichlet boundary conditions in a spherical shell $\{R(t) < | x| < R_1\}$,  for a given $R_1>0$ and $R(0)$ small enough. Then, to simplify the problem, we fix the external radius $R_1$ and let $p$~satisfy
\begin{equation} \label{p'} 
\begin{cases}
-\Delta p = 1,\quad  &\text{ for  }  \quad R(t) < | x| < R_1,
\\[2pt]
p(x)=0, \quad  &\text{ for } \quad |x|= R(t) \ \text{ or } |x|= R_1 ,
\\[2pt]
R'(t)=-\nabla p \cdot \vec{n},  &\text{ for } \quad |x|= R(t).
\end{cases}
\end{equation}
Here, $\vec{n}$ denotes the inner normal to the ball $B_{R(t)}(0)$. As in~\cite{AG},   $R(t) $ diminishes and  vanishes in finite time,  generating a singularity $|\nabla p| \to \infty$. The power $4$ turns out to be the highest possible integrability in time at this singular time.
We treat the case of dimension 2. In higher dimension the radial solutions are more regular and the worse singularity would be obatined for a cylinder with a 2 dimensional basis.

\paragraph*{Case $d=2$.}
With spherical symmetry, we set $p:=p(r)$, $r=|x|$ and equation \eqref{p'} reads 
$$
\frac 1 r (rp')' =1.
$$
Integrating once, we get, for some $a(t)$
\begin{equation*}
p'=-\frac{r}{2}+\frac{a(t)}{r},
\end{equation*}
and the second integration yields
\begin{equation*}
p=-\frac{r^2}{4}+a(t) \ln r +b(t).
\end{equation*}
Imposing $p(R_1)=p(R(t))=0,$ we find
\begin{align*}
b(t)&=\frac{R_1^2}{4}-a(t) \ln R_1,\\
\frac{R(t)^2}{4}- a(t) \ln R(t) &= \frac{R_1^2}{4}-a(t) \ln R_1.
\end{align*}
Hence for $R(t)\approx 0$, we have
\begin{equation}\label{a}
a(t) \approx - \frac{R_1^2}{4 \ln R(t)}, \qquad \qquad R'(t)\approx\frac{1}{ R(t) \; \ln R(t) } .
\end{equation}
Therefore there is  $T>0$ when $R(T^-)=0$ and as $t \approx T$, we compute
\begin{equation*}
\int_0^T \int_{B_{R_1}(0)} |\nabla p(x)|^\alpha dx dt = \int_0^T \int_{R(t)}^{R_1} |p'(r)|^\alpha r   dr dt\approx\int_0^T \int_{R(t)}^{R_1} \frac{|a(t)|^\alpha}{r^{\alpha -1}}   dr dt.
\end{equation*}
The singularity at $T$ is thus driven by 
\begin{equation*}
\int_0^T\frac{|a(t)|^\alpha}{R(t)^{\alpha -2}} dt\approx  \int_0^T\frac{1}{| \ln R(t) |^{\alpha}\, R(t)^{\alpha-2}} dt \approx 
\int_{0} ^{R(0)} \frac{1}{|\ln R|^{\alpha-1} R^{\alpha-3}} dR
\end{equation*}
by the change of variable $R=R(t)$ and using  equation \eqref{a}. We recall that we have chosen $R(0)$ small enough.

This integral is finite for  $1\leq \alpha \leq 4$ and infinite for $\alpha >4$.
\commentout{
\paragraph{Case $d>2$.}
We consider the spherically symmetric case, $p=p(|x|)$ and we rewrite the system as
\begin{equation*}
\begin{cases}
-\frac{1}{r^{d-1}} (r^{d-1} p')' = 1, \text{ for } R(t)\leq r \leq R_1,\\
p(R(t))=p(R_1)=0,\\
R'(t)=-p'(R(t)).
\end{cases}
\end{equation*}
From the first equation of the system, we recover successively
\begin{align}\label{p'}
p'&=-\frac{r}{d} +\frac{\tilde{a}(t)}{r^{d-1}},\\
\nonumber 
p&=-\frac{r^2}{2d}+\frac{a(t)}{r^{d-2}}+b(t),
\end{align}
with $\tilde{a}=a (2-d)$. The velocity of the boundary is
\begin{equation}\label{R'}
R'(t)=\frac{R(t)}{d} -\frac{\tilde{a}(t)}{R(t)^{d-1}}.
\end{equation}
By the finite speed propagation property, there exists $T>0$ such that R(T)=0.

Using the Dirichlet boundary conditions we find
\begin{align*}
b(t)&=\frac{R_1^2}{2 d}-\frac{a(t)}{R_1^{d-2}},\\
\frac{R(t)^2}{2 d} -\frac{a(t)}{R(t)^{d-2}}&= \frac{R_1^2}{2 d} -\frac{a(t)}{R_1^{d-2}}.
\end{align*}
From now on we consider $R(t)<<1$, since the singularity appears when $R(t)$ tends to zero. Then, formally we have
\begin{equation*}
a(t)\approx -R(t)^{d-2} \frac{R_1^2}{2d}
\end{equation*}
and 
\begin{equation}\label{atilde}
\tilde{a}(t)\approx (d-2)R(t)^{d-2} \frac{R_1^2}{2d}
\end{equation}
From \eqref{R'}, $R(t)$ satisfies the following ordinary differential equation 
\begin{equation}\label{ode}
R'(t)\approx-\frac{1}{R(t)}, \text{ with } R(T)=0.
\end{equation} 
Our goal is to find the optimal $\alpha$ such that
\begin{equation*}
\int_0^T \int_{B_{R_1}(0)} |\nabla p(x)|^\alpha dx dt < \infty,
\end{equation*}
which is equivalent to prove
\begin{equation*}
\int_0^T \int_{R(t)}^{R_1} |p'(r)|^\alpha r^{d-1} dr dt < \infty.
\end{equation*}
From the relation \eqref{p'} we have
\begin{equation}\label{int1}
\int_0^T \int_{R(t)}^{R_1} |p'(r)|^\alpha r^{d-1} dr dt \approx\int_0^T \int_{R(t)}^{R_1} \frac{\tilde{a}(t)^\alpha}{r^{(d-1)(\alpha-1)}} dr dt,
\end{equation}
where we neglected the term $r/d$.
To prove that \eqref{int1} is bounded, it is sufficient to obtain the boundness of the integral
\begin{equation*}
\int_0^T \frac{\tilde{a}(t)^\alpha}{R(t)^{(d-1)(\alpha -1)-1}}dt
\approx
\int_0^T R(t)^{d-\alpha} \approx \int_0^T (T-t)^{\frac{d-\alpha}{2}}dt,
\end{equation*}
where we used \eqref{atilde} and \eqref{ode}.
Hence, the required condition on $\alpha$ is 
\begin{equation*}
\alpha < 2+d.
\end{equation*}
} 
\end{appendices}
\bibliographystyle{plain}
\bibliography{biblio}

\begin{thebibliography}{10}

\bibitem{AB}
{Aronson, D. G. and B\'enilan, P.}
\newblock R\'{e}gularit\'{e} des solutions de l'\'{e}quation des milieux poreux
  dans {${\bf R}^{N}$}.
\newblock {\em C. R. Acad. Sci. Paris S\'{e}r. A-B}, 288(2):A103--A105, 1979.

\bibitem{AG}
{Aronson, D. G. and Graveleau, J.}
\newblock A selfsimilar solution to the focusing problem for the porous medium
  equation.
\newblock {\em European Journal of Applied Mathematics}, 4(1):65–81, 1993.

\bibitem{BCGR}
{Bresch, D. and Colin, T. and Grenier, E. and Ribba, B. and Saut, O.}
\newblock Computational modeling of solid tumor growth: the avascular stage.
\newblock {\em SIAM J. Sci. Comput.}, 32(4):2321--2344, 2010.

\bibitem{BPPS}
{Bubba, F. and Perthame, B. and Pouchol, C. and Schmidtchen, M.}
\newblock Hele-{S}haw limit for a system of two reaction-(cross-)diffusion
  equations for living tissues.
\newblock {\em Arch. Rational. Mech. Anal.}, 236:735--766, 2020.

\bibitem{byrneChaplain96}
{Byrne, H. M. and Chaplain, M. A. J.}
\newblock Growth of necrotic tumors in the presence and absence of inhibitors.
\newblock {\em Mathematical biosciences}, 135(2):187--216, 1996.

\bibitem{BC96}
{Byrne, H. M. and Chaplain, M. A. J.}
\newblock Modelling the role of cell-cell adhesion in the growth and
  development of carcinomas.
\newblock {\em Math. Comput. Modelling}, 24:1--17, 1996.

\bibitem{BD}
{Byrne, H. M. and Drasdo, D.}
\newblock Individual-based and continuum models of growing cell populations: a
  comparison.
\newblock {\em J. Math. Biol.}, 58(4-5):657--687, 2009.

\bibitem{BKMP2}
{Byrne, H. M. and King, J. R. and McElwain, D. L. S. and Preziosi, L.}
\newblock A two-phase model of solid tumour growth.
\newblock {\em Appl. Math. Lett.}, 16(4):567--573, 2003.

\bibitem{BP}
{Byrne, H. M. and Preziosi, L.}
\newblock Modelling solid tumour growth using the theory of mixtures.
\newblock {\em Math. Med. Biol.}, 20(4):341--366, 2004.

\bibitem{SalsaCaffa}
{Caffarelli, L. A. and Salsa, S.}
\newblock {\em A geometric approach to free boundary problems}, volume~68 of
  {\em Graduate Studies in Mathematics}.
\newblock American Mathematical Society, Providence, RI, 2005.

\bibitem{CFSS}
{Carrillo, J. A. and Fagioli, S. and Santambrogio, F. and Schmidtchen, M.}
\newblock Splitting schemes and segregation in reaction cross-diffusion
  systems.
\newblock {\em SIAM J. Math. Anal.}, 50(5):5695--5718, 2018.

\bibitem{CBCB}
{Chatelain, C. and Balois, T. and Ciarletta, P. and Ben Amar, M.}
\newblock Emergence of microstructural patterns in skin cancer: a phase
  separation analysis in a binary mixture.
\newblock {\em New Journal of Physics}, 13:115013, 2011.

\bibitem{CrPi1982}
{Crandall, M. G. and Pierre M.}
\newblock Regularizing effects for {$u_{t}=\Delta \varphi (u)$}.
\newblock {\em Trans. Amer. Math. Soc.}, 274(1):159--168, 1982.

\bibitem{DeSc}
{D{\c e}biec, T. and Schmidtchen, M.}
\newblock Incompressible limit for a two-species tumour model with coupling
  through brinkman's law in one dimension.
\newblock {\em Acta Applicandae Mathematicae}, 2020.

\bibitem{DHV}
{Degond, P. and Hecht, S. and Vauchelet, N.}
\newblock Incompressible limit of a continuum model of tissue growth for two
  cell populations.
\newblock {\em Networks \& Heterogeneous Media}, 15(1):57--85, 2020.

\bibitem{HS}
{Figalli, A. and Shahgholian, H.}
\newblock An overview of unconstrained free boundary problems.
\newblock {\em Philos. Trans. Roy. Soc. A}, 373(2050):20140281, 11, 2015.

\bibitem{frihie}
{Friedman, A.}
\newblock A hierarchy of cancer models and their mathematical challenges.
\newblock {\em Discrete Contin. Dyn. Syst. Ser. B}, 4(1):147--159, 2004.
\newblock Mathematical models in cancer (Nashville, TN, 2002).

\bibitem{Fri}
{Friedman, A.}
\newblock Mathematical analysis and challenges arising from models of tumor
  growth.
\newblock {\em Math. Models Methods Appl. Sci.}, 17(suppl.):1751--1772, 2007.

\bibitem{Green}
{Greenspan, H. P.}
\newblock On the growth and stability of cell cultures and solid tumors.
\newblock {\em J. Theoret. Biol.}, 56(1):229--242, 1976.

\bibitem{GPS}
{Gwiazda, P. and Perthame, B. and Świerczewska-Gwiazda, A.}
\newblock A two-species hyperbolic–parabolic model of tissue growth.
\newblock {\em Communications in Partial Differential Equations},
  44(12):1605--1618, 2019.

\bibitem{LTWZ}
{Liu, J.-G. and Tang, M. and Wang L. and Zhou, Z.}
\newblock An accurate front capturing scheme for tumor growth models with a
  free boundary limit.
\newblock {\em J. Comp. Phys.}, 364:73--94, 2018.

\bibitem{LLP}
{Lorenzi, T. and Lorz, A. and Perthame, B.}
\newblock On interfaces between cell populations with different mobilities.
\newblock {\em Kinet. Relat. Models}, 10(1):299--311, 2017.

\bibitem{LFJCMWC}
{Lowengrub, J. S. and Frieboes, H. B. and Jin, F. and Chuang, Y.-L. and Li, X
  and Macklin, P. and Wise, S. M. and Cristini, V.}
\newblock Nonlinear modelling of cancer: bridging the gap between cells and
  tumours.
\newblock {\em Nonlinearity}, 23(1):R1--R91, 2010.

\bibitem{VaVi}
{Lu, P. and Ni, L. and V\'{a}zquez, J. L. and Villani, C.}
\newblock An accurate front capturing scheme for tumor growth models with a
  free boundary limit.
\newblock {\em J. Comput. Phys.}, 364:73--94, 2018.

\bibitem{MMACCL}
{Macklin, P. and McDougall, S. and Anderson, A. R. and Chaplain, M. A. and
  Cristini, V. and Lowengrub, J.}
\newblock Multiscale modelling and nonlinear simulation of vascular tumour
  growth.
\newblock {\em J. Math. Biol.}, 58(4-5):765--798, 2009.

\bibitem{MePeQu}
{Mellet, A. and Perthame, B. and Quir\'{o}s, F.}
\newblock A {H}ele-{S}haw problem for tumor growth.
\newblock {\em J. Funct. Anal.}, 273(10):3061--3093, 2017.

\bibitem{PQV}
{Perthame, B. and Quir\'{o}s, F. and V\'{a}zquez, J. L.}
\newblock The {H}ele-{S}haw asymptotics for mechanical models of tumor growth.
\newblock {\em Arch. Ration. Mech. Anal.}, 212(1):93--127, 2014.

\bibitem{PTV}
{Perthame, B. and Tang, M. and Vauchelet, N.}
\newblock Traveling wave solution of the {H}ele-{S}haw model of tumor growth
  with nutrient.
\newblock {\em Math. Models Methods Appl. Sci.}, 24(13):2601--2626, 2014.

\bibitem{PV}
{Perthame, B. and Vauchelet, N.}
\newblock Incompressible limit of a mechanical model of tumour growth with
  viscosity.
\newblock {\em Philos. Trans. Roy. Soc. A}, 373(2050):20140283, 16, 2015.

\bibitem{PT}
{Preziosi, L. and Tosin, A.}
\newblock Multiphase modelling of tumour growth and extracellular matrix
  interaction: mathematical tools and applications.
\newblock {\em J. Math. Biol.}, 58(4-5):625--656, 2009.

\bibitem{RJPJ}
{Ranft, J. and Basana, M. and Elgeti, J. and Joanny, J. F. and Prost, J. and
  J\"ulicher, F.}
\newblock Fluidization of tissues by cell division and apoptosis.
\newblock {\em Natl. Acad. Sci. USA}, 49:657--687, 2010.

\bibitem{RSCB}
{Ribba, B. and Saut, O. and Colin, T. and Bresch, D. and Grenier, E. and
  Boissel, J. P.}
\newblock A multiscale mathematical model of avascular tumor growth to
  investigate the therapeutic benefit of anti-invasive agents.
\newblock {\em J. Theoret. Biol.}, 243(4):532--541, 2006.

\bibitem{RCM}
{Roose, T. and Chapman, S. J. and Maini, P. K.}
\newblock Mathematical models of avascular tumor growth.
\newblock {\em SIAM Rev.}, 49(2):179--208, 2007.

\bibitem{SCh}
{Sherratt, Jonathan A. and Chaplain, Mark A. J.}
\newblock A new mathematical model for avascular tumour growth.
\newblock {\em J. Math. Biol.}, 43(4):291--312, 2001.

\bibitem{V}
{Vazquez, J. L.}
\newblock {\em The porous medium equation: mathematical theory}.
\newblock Oxford Mathematical Monographs. The Clarendon Press, Oxford
  University Press, Oxford, 2007.
\newblock Mathematical theory.

\end{thebibliography}
\end{document}